\documentclass[11pt,leqno]{amsart}
\usepackage[margin=1in]{geometry}
\usepackage{amsmath, amsthm, amssymb}
\usepackage{amsfonts}
\usepackage[colorlinks=true, pdfstartview=FitV, linkcolor=blue,citecolor=blue, urlcolor=blue]{hyperref}
\usepackage[T1]{fontenc}
\usepackage[utf8]{inputenc}
\usepackage[foot]{amsaddr}
\usepackage{parskip}

\theoremstyle{theorem}
\newtheorem{thm}{Theorem}[section]

\theoremstyle{corollary}

\theoremstyle{lemma}
\newtheorem{lemma}{Lemma}[section]

\theoremstyle{definition}
\newtheorem{definition}{Definition}[section]

\theoremstyle{proposition}
\newtheorem{proposition}{Proposition}[section]

\theoremstyle{proof}

\theoremstyle{boldremark}
\newtheorem{rem}{Remark}[section]

\title{Time-optimal control of a two-peakon collision }
\author{Tomasz Cie\'slak and Bidesh Das}
\address[A1,A2]{Institute of Mathematics, Polish Academy of Sciences, Warsaw, 00-656, Poland }
\email[A1,A2]{cieslak@impan.pl and dasbiiser014@gmail.com }
\date{}
\begin{document}

\begin{titlepage}
\begin{abstract}
\hspace{0.6cm}We study a time-optimal control problem of a two-peakon collision. First, we state the controllability. Next, we find the time-optimal strategy. This is done via the Hamilton-Jacobi-Bellman equation and the dynamic programming method. We find a tricky way to solve a corresponding Hamilton-Jacobi-Bellman equation. Our approach can be extended to more general control problems. Indeed, a special version of a maximum principle for time-optimal controls of particular Hamiltonian systems is proposed.
\end{abstract}
\maketitle
{\bf Keywords:} \ time-optimal control, controllability, peakons.
\section{Introduction}
\vspace{0.5cm}
Multipeakons are particular solutions of the Camassa-Holm equation considered already in \cite{CH}. We say that $u:[0,T]\times\mathbb{R}\to\mathbb{R}$ is a solution to the Camassa-Holm equation if $u$ satisfies
\begin{eqnarray}\label{1}	
u_t-u_{xxt}+3uu_x-2u_xu_{xx}-uu_{xxx}=0.
\end{eqnarray}
Such an equation models waves in shallow water \cite{CH}. An $n$-peakon solution of equation(\ref{1}) is given by the formula
\begin{eqnarray}\label{2}
	u(x,t)=\sum\limits_{i=1}^np_i(t)e^{-|x-q_i(t)|},
\end{eqnarray}
where $t\geq0$, $x\in\mathbb{R}$ and $p(t)=\left(p_1(t),\ldots,p_n(t)\right),
q(t)=\left(q_1(t),\ldots,q_n(t)\right)$. In order to make sure that $(\ref{2})$ solves $(\ref{1})$, the Hamiltonian system
\begin{eqnarray}\label{3}
	\dot{q_i}=\frac{\partial H}{\partial p_i}, \hspace{2cm} \dot{p_i}=-\frac{\partial H}{\partial q_i},\ \ \ i=1,\ldots,n,
\end{eqnarray}
with Hamiltonian \ $H(p,q):=\frac{1}{2}\sum\limits_{i,j=1}^np_ip_je^{-|q_i-q_j|}$, has to be satisfied.

We say that a multipeakon $u(x,t)$, given by formula $(\ref{2})$, collides at time $t^*> 0$ if $q_i(t^*) = q_j(t^*)$ for some distinct $i$ and $j$. If it is the case then $t^*$ is referred to as a collision time for $u$ and $q^*= q(t^*)$ is referred to as the corresponding collision point.

The studies of the dynamics of multipeakons were started already in \cite{CH}. The necessary and sufficient condition of collision for multipeakons and explicit formula of collision time have been obtained in \cite{BSS}. The geometric studies of two-peakon have been started in \cite{CGKM}, \cite{Wk}, \cite{TW}. Specially, in \cite{Wk} the author presented a bi-Hamiltonian formulation of the system explicitly and wrote down formulae for the associated first integrals.

In this paper, we study equation controllability as well as time-optimal controls of collision of two-peakons. We solve the controllabilty issue of control problem $(\ref{6})$. First, in section 2, the control problem related to the collision of a two-peakon is introduced. Also, the main theorem is stated. Its proof is obtained in the next two sections. In section 3 we reduce the initially four dimensional Hamiltonian system, related to the controllability of collisions of two-peakon, to a plane problem using a special change of variables. Then the controllability is proven in a straightforward way. Plane time-optimal problems are better understood, at least since the important contribution of \cite{HS} and the deep characterization in \cite{BP}. Still, our problem is quite singular, we shall explain on this issue further, and so the switching function approach of Sussmann is not applicable in a straightforward way. Instead of adjusting it to our situation, we rather use the dynamic programming method. In section 4, a  Hamilton-Jacobi-Bellman equation for the shortest time value function is found. It turns out to be a nonlinear Hamilton-Jacobi equation with a completely non-standard boundary condition. Again, a standard theory of viscosity solution does not seem to be applicable without essential modifications. Still, we are able to solve the Hamilton-Jacobi equation in a tricky way. Next, a standard procedure allows us to identify a time-optimal control, which turns out to be of no-switching form. Our method seems quite new. It has some consequences for more general control problems of Hamiltonian form. In section five, as a consequence of our attitude, we give more general theorems on time-optimal controls for a special type of control problems. 

Finally, let us mention that we are not the first to deal with controllability issues related to the Camassa-Holm equation, $(\ref{1})$. The controllability of the full equation $(\ref{1})$ has been discussed in \cite{OG}. It is a problem of a different nature, however related to our activity.  

\section{Hamiltonian system for Camassa-Holm Equation with control parameters}

The present section is devoted to the introduction of a control problem as well as formulation of the main theorem. 

We aim to study a control problem where controls act exactly on peaks and throughs of the peakon. Let us introduce control parameters in $(\ref{1})$. Hamiltonian system, describing the evolution of a multipeakon solution to $(\ref{1})$, is already defined in $(\ref{3})$. We shall arrive at the description of the evolution of a two-peakeon, solving the Camassa-Holm equation with additional, very specific controls. We mention that the global result on exact controllability and asymptotic stabilization of the Camassa-Holm equation on the circle have already been obtained in \cite{OG}. \\
\\
Introducing controls $a_1,a_2\in[-\alpha,\alpha]$ acting on peak's heights only, $(\ref{1})$ reads
\begin{eqnarray}\label{6}
	u_t-u_{xxt}+3uu_x-2u_xu_{xx}-uu_{xxx}+a_1\delta(x-q_1)+a_2\delta(x-q_2)=0.
\end{eqnarray}
Now we find a system of equations satisfied by a two-peakon solving $(\ref{6})$. Our two-peakon solution is not differentiable in a classical sense. So, all derivatives occuring in the proof, are meaningful in a sense of distributions. Let us recall that $(sign(x))'=2\delta$, $(e^{-|x|})'=-sign(x)e^{-|x|}$ and $(e^{-|x|}sign(x))'=2\delta-e^{-|x|}$ in the sense of distributions.

Before stating main result, we recall an useful lemma on distributions.
\begin{lemma}\label{4}
Let us assume $f:\mathbb{R}\to\mathbb{R}$ is a Lipschitz continuous function. Then the dirac delta distribution $\delta_{x_0}\in \mathcal{D}'(\mathbb{R})$ satisfies the relation $f\delta_{x_0}'[\varphi]=-f\delta_{x_0}[\varphi']-f'\delta_{x_0}[\varphi]$ where $\varphi\in\mathcal{D}(\mathbb{R})$ and $x_0\in\mathbb{R}$ are arbitrary.
\end{lemma}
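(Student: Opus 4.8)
The plan is to prove the distributional identity $f\delta_{x_0}'[\varphi] = -f\delta_{x_0}[\varphi'] - f'\delta_{x_0}[\varphi]$ directly from the definitions of the relevant distributional operations, being careful about the fact that $f$ is only Lipschitz (hence differentiable a.e. by Rademacher, but not everywhere classically). The key point is simply to unwind what the product $f\delta_{x_0}'$ means and apply the definition of the distributional derivative, tracking the Leibniz-type correction term.

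First I would recall the conventions. For a distribution $T$ and a smooth function $g$, the product $gT$ acts by $(gT)[\varphi] = T[g\varphi]$, and the distributional derivative satisfies $T'[\varphi] = -T[\varphi']$. Applying these to $T = \delta_{x_0}'$ and $g = f$, I compute
\begin{equation}
f\delta_{x_0}'[\varphi] = \delta_{x_0}'[f\varphi] = -\delta_{x_0}[(f\varphi)'].
\end{equation}
At this stage the Leibniz rule $(f\varphi)' = f'\varphi + f\varphi'$ must be invoked, which gives
\begin{equation}
f\delta_{x_0}'[\varphi] = -\delta_{x_0}[f'\varphi + f\varphi'] = -\delta_{x_0}[f'\varphi] - \delta_{x_0}[f\varphi'].
\end{equation}
Reinterpreting each term as a product of a function with $\delta_{x_0}$ yields exactly $-f'\delta_{x_0}[\varphi] - f\delta_{x_0}[\varphi']$, which is the claimed relation.

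The main obstacle, and the reason the Lipschitz hypothesis is singled out rather than assuming $f \in C^\infty$, is the justification of the two steps that implicitly require $f$ to be an admissible multiplier and to satisfy the Leibniz rule. The product $f\delta_{x_0}'$ is not defined for arbitrary distributions $f$; it is well-defined here because $\delta_{x_0}'$ is supported at the single point $x_0$, so only the local behaviour of $f$ near $x_0$ matters, and $f\varphi$ remains a legitimate (Lipschitz times smooth, hence Lipschitz and compactly supported) test-type object on which $\delta_{x_0}'$ can act after noting it needs only one derivative of its argument. I would therefore make precise that the pairing $\delta_{x_0}'[f\varphi]$ means $-(f\varphi)'(x_0)$, which is well-defined provided $f\varphi$ is differentiable at $x_0$; the Lipschitz continuity of $f$ guarantees differentiability almost everywhere and, more importantly here, that the product $f\varphi$ has a classical derivative wherever $f$ does, with the Leibniz rule holding at such points.

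The cleanest way to handle this rigorously is to evaluate everything pointwise at $x_0$: by definition $\delta_{x_0}[\psi] = \psi(x_0)$ and, for the derivative, $\delta_{x_0}'[\psi] = -\psi'(x_0)$ whenever $\psi$ is differentiable at $x_0$. Thus the entire computation reduces to the elementary identity $(f\varphi)'(x_0) = f'(x_0)\varphi(x_0) + f(x_0)\varphi'(x_0)$, valid at every point where $f$ is differentiable. Since the paper applies this lemma with $f$ of the form $e^{-|x-q_i|}$ or $\mathrm{sign}(x-q_i)e^{-|x-q_i|}$, which are Lipschitz and smooth away from the peak locations, I would note that the identity is to be understood at points $x_0$ where $f'$ exists; at the isolated non-differentiable points the formula is interpreted via the appropriate one-sided or distributional value. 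This pointwise reduction is what makes the proof both short and honest about the regularity, and I expect the only genuine subtlety to be stating clearly in what sense $f'$ is taken so that the identity is unambiguous when it is later applied at the peakon positions.
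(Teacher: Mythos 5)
The paper states this lemma without any proof, so there is no argument of the authors to compare against; judged on its own, your core computation --- $f\delta_{x_0}'[\varphi]=\delta_{x_0}'[f\varphi]=-(f\varphi)'(x_0)$ followed by the Leibniz rule --- is certainly the intended one, and it is fully rigorous whenever $f$ is differentiable at $x_0$. The genuine gap is the remaining case, which you acknowledge but then dismiss with the phrase ``interpreted via the appropriate one-sided or distributional value.'' That phrase hides the entire difficulty. The lemma asserts the identity for \emph{arbitrary} $x_0$, and the paper applies it, in the proof of Proposition \ref{5}, precisely at a point of non-differentiability: the coefficient of $\delta'(x-q_i)$ there contains the term $p_i^2e^{-|x-q_i|}$, whose classical derivative does not exist at $x_0=q_i$. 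At such a point neither side of the claimed identity has a canonical meaning: $f'$ is only an $L^\infty$ function with a jump at $x_0$, and pairing it with $\delta_{x_0}$ is exactly the ill-defined ``Heaviside times delta'' product that classical distribution theory does not resolve. Different interpretations give genuinely different answers --- taking a one-sided value of $f'$ at $q_i$ would inject a spurious term proportional to $p_i^2$ into the equation for $\dot p_i$, and $(\ref{7})$--$(\ref{10})$ would come out wrong --- so leaving the choice unspecified is not a proof of the statement actually being used.

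To close the gap you must fix a definition of both products and verify the identity for that definition. The choice consistent with the paper's stated conventions ($\mathrm{sign}(0)=0$, $(e^{-|x|})'=-\mathrm{sign}(x)e^{-|x|}$) is the symmetric (Vol'pert) value: define $f\delta_{x_0}'[\varphi]:=\lim_{\epsilon\to0}\delta_{x_0}'\left[(f*\rho_\epsilon)\varphi\right]$ with an \emph{even} mollifier $\rho_\epsilon$, and likewise $f'\delta_{x_0}[\varphi]:=\lim_{\epsilon\to 0}\left((f*\rho_\epsilon)'\delta_{x_0}\right)[\varphi]$. For a Lipschitz $f$ whose derivative has one-sided limits at $x_0$ (true of every function the paper feeds into the lemma), one computes $(f*\rho_\epsilon)'(x_0)\to\frac{1}{2}\left(f'(x_0^+)+f'(x_0^-)\right)$; your computation applied to the smooth functions $f*\rho_\epsilon$, followed by passage to the limit, then yields the identity with $f'(x_0)$ meaning this symmetric value. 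For $f=e^{-|x-x_0|}$ the symmetric value is $0$, which is exactly what the convention $\mathrm{sign}(0)=0$ encodes and exactly what the derivation of $(\ref{7})$--$(\ref{10})$ requires. With this addition your argument becomes a complete and honest proof; without it, the lemma is proved only on the set where $f$ is differentiable, which excludes the very points where the paper uses it.
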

The following proposition is simillar to proving that a multipeakon $(\ref{2})$ is a solution to the Camassa-Holm equation $(\ref{1})$. Proving the following proposition, we shall follow the exposition in \cite{Wk}.
\begin{proposition}\label{5}
Let $a_1,a_2:[0,t]\to[-\alpha,\alpha], \ \alpha>0$, are control parameters and $u(x,t)$ is a two-peakon solution of (\ref{6}). Then $u$ satisfies the following Hamiltonian system
\begin{align}
		\dot{p_i}&=\sum\limits_{j=1}^2p_ip_je^{-|q_i-q_j|}sign(q_i-q_j)-\frac{a_i}{2}, \label{7}\\
		\dot{q_i}&=\sum\limits_{j=1}^2p_je^{-|q_i-q_j|}\label{10}.
\end{align}
In other words, $(\ref{7})$ and $(\ref{10})$ can be written down as a Hamiltonian with $H_a(p,q)=H_1+\frac{1}{2}\left(a_1q_1+a_2q_2\right)$, where $H_1=\frac{1}{2}\left(p_1^2+2p_1p_2e^{-|q_1-q_2|}+p_2^2\right)$.
\end{proposition}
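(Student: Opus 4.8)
The plan is to derive the system $(\ref{7})$--$(\ref{10})$ by plugging the two-peakon ansatz $(\ref{2})$ into the controlled equation $(\ref{6})$ and collecting the resulting distributions, exactly as one does in the uncontrolled case but now tracking the extra source terms $a_1\delta(x-q_1)+a_2\delta(x-q_2)$. First I would record the distributional derivatives of $u(x,t)=\sum_{i=1}^2 p_i(t)e^{-|x-q_i(t)|}$ in both variables. Differentiating in $t$ produces terms with $\dot p_i$ and $\dot q_i$, while differentiating in $x$ uses the identities recalled just before the statement, namely $(e^{-|x|})'=-\operatorname{sign}(x)e^{-|x|}$ and $(e^{-|x|}\operatorname{sign}(x))'=2\delta-e^{-|x|}$. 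The key technical device is that the whole left-hand side of $(\ref{6})$, being a combination of $e^{-|x-q_i|}$ and its distributional derivatives, can only contain terms proportional to $e^{-|x-q_i|}$, to $\operatorname{sign}(x-q_i)e^{-|x-q_i|}$, and to Dirac masses $\delta(x-q_i)$ (and possibly $\delta'(x-q_i)$). Since $\{e^{-|x-q_i|},\,\operatorname{sign}(x-q_i)e^{-|x-q_i|},\,\delta(x-q_i),\,\delta'(x-q_i)\}$ are linearly independent as distributions, I would set the coefficient of each basis element separately to zero.

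The natural order of steps is therefore: (i) compute $u_t-u_{xxt}$ and show that the combination $u_t-u_{xxt}$ collapses to a linear combination of $\delta(x-q_i)$ and $\delta'(x-q_i)$ with coefficients built from $\dot p_i,\dot q_i$; (ii) compute the nonlinear terms $3uu_x-2u_xu_{xx}-uu_{xxx}$ and, using the product-rule identities together with \autoref{4} to handle the Lipschitz coefficients multiplying the Dirac masses, rewrite them likewise in terms of the same basis distributions; (iii) add the control term $a_1\delta(x-q_1)+a_2\delta(x-q_2)$; and (iv) equate coefficients. Matching the coefficients of $\delta'(x-q_i)$ yields the velocity relation $(\ref{10})$, i.e. $\dot q_i=\sum_j p_j e^{-|q_i-q_j|}$, while matching the coefficients of $\delta(x-q_i)$ yields the momentum relation $(\ref{7})$, with the control entering precisely as $-a_i/2$ because the source $a_i\delta(x-q_i)$ must cancel against the $\delta(x-q_i)$ contribution generated by $\dot p_i$.

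For the Hamiltonian reformulation I would verify directly that, with $H_a(p,q)=H_1+\tfrac12(a_1 q_1+a_2 q_2)$ and $H_1=\tfrac12\bigl(p_1^2+2p_1p_2 e^{-|q_1-q_2|}+p_2^2\bigr)$, Hamilton's equations $\dot q_i=\partial H_a/\partial p_i$ and $\dot p_i=-\partial H_a/\partial q_i$ reproduce $(\ref{10})$ and $(\ref{7})$. The computation $\partial H_a/\partial p_i=p_i+p_j e^{-|q_i-q_j|}=\sum_j p_j e^{-|q_i-q_j|}$ gives $(\ref{10})$ immediately, and $-\partial H_a/\partial q_i$ produces the term $p_1 p_2 e^{-|q_1-q_2|}\operatorname{sign}(q_i-q_j)$ together with the control contribution $-\tfrac12 a_i$ coming from differentiating the linear term $\tfrac12(a_1q_1+a_2q_2)$, which matches $(\ref{7})$ after noting that the $j=i$ summand in $\sum_j p_ip_j e^{-|q_i-q_j|}\operatorname{sign}(q_i-q_j)$ vanishes since $\operatorname{sign}(0)=0$.

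The main obstacle I anticipate is the careful bookkeeping of the $\delta$-type distributions arising when the nonlinear terms are differentiated: products such as $u_x u_{xx}$ and $u u_{xxx}$ generate jump contributions at each peak location, and one must use \autoref{4} to correctly interpret expressions of the form $f(x)\delta'(x-q_i)$, replacing them by $f(q_i)\delta'(x-q_i)-f'(q_i)\delta(x-q_i)$ so that all the $\delta'$ and $\delta$ coefficients are expressed through the pointwise values $u(q_i),u_x(q_i)$ and their regularized one-sided limits. Because $u_x$ is discontinuous at the peaks, the appropriate convention is to evaluate the Lipschitz factors using the averaged (principal) value, and ensuring the peak-height derivative terms assemble exactly into $(\ref{7})$ with the correct coefficient $-a_i/2$ is the delicate point. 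Once this cancellation is tracked consistently, the remaining identifications are routine, and I would finish by remarking that both peak indices are treated symmetrically so that no separate argument is needed for $i=1$ versus $i=2$.
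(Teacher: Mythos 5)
Your proposal is correct and takes essentially the same route as the paper's own proof: substitute the two-peakon ansatz into $(\ref{6})$, compute the distributional derivatives, use \autoref{4} to rewrite the $f\delta'$ terms so that everything collapses onto $\delta(x-q_i)$ and $\delta'(x-q_i)$ with the principal-value convention $sign(0)=0$, and then conclude by linear independence (equivalently, arbitrariness of the test function $\varphi$), with the $\delta'$ coefficients giving $(\ref{10})$ and the $\delta$ coefficients giving $(\ref{7})$. The concluding direct verification of the Hamiltonian form $H_a=H_1+\frac{1}{2}(a_1q_1+a_2q_2)$ likewise matches the paper's reasoning.
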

\vspace{-0.6cm}
\begin{proof}
	Take a two-peakon $u(x,t)=\sum\limits_{i=1}^2p_ie^{-|x-q_i|}$. We compute the derivatives
\begin{align*}
	 u_t&=\sum\limits_{i=1}^2\dot{p_i}e^{-|x-q_i|}+\sum\limits_{i=1}^2p_i\dot{q_i}e^{-|x-q_i|}sign(x-q_i),\\u_x&=-\sum\limits_{i=1}^2p_ie^{-|x-q_i|}sign(x-q_i),\\u_{xx}&=\sum\limits_{i=1}^2p_ie^{-|x-q_i|}-2\sum\limits_{i=1}^2p_i\delta(x-q_i),\\u_{xxx}&=-\sum\limits_{i=1}^2p_ie^{-|x-q_i|}sign(x-q_i)-2\sum\limits_{i=1}^2p_i\delta'(x-q_i),\\u_{xxt}&=\sum\limits_{i=1}^2\dot{p_i}e^{-|x-q_i|}+\sum\limits_{i=1}^2p_i\dot{q_i}e^{-|x-q_i|}sign(x-q_i)-2\sum\limits_{i=1}^2\dot{p_i}\delta(x-q_i)+2\sum\limits_{i=1}^2p_i\dot{q_i}\delta'(x-q_i),
\end{align*}
where $\delta$ is the Dirac distribution and $\delta'$ is its derivative.\\
\\
Putting the above in (\ref{6}), we get
\begin{align*}
	 2\sum\limits_{i=1}^2\delta(x-q_i)\left(\dot{p_i}-2p_i\sum\limits_{j=1}^2p_je^{-|x-q_j|}sign(x-q_j)\right)\\-2\sum\limits_{i=1}^2\delta'(x-q_i)\left(p_i\dot{q_i}-p_i\sum\limits_{j=1}^2p_je^{-|x-q_j|}\right)\\+a_1\delta(x-q_1)+a_2\delta(x-q_2)=0.
\end{align*}
Evaluating this on any $\varphi\in \mathcal{D}(\Omega)$ and having in mind $f\delta'[\varphi]=-f'\delta[\varphi]-f\delta[\varphi']$, from \autoref{4} we get
\begin{align*}
	 2\sum\limits_{i=1}^2\delta(x-q_i)\left(\dot{p_i}-p_i\sum\limits_{j=1}^2p_je^{-|x-q_j|}sign(x-q_j)\right)[\varphi]\\+2\sum\limits_{i=1}^2\delta(x-q_i)p_i\left(\dot{q_i}-\sum\limits_{j=1}^2p_je^{-|x-q_j|}\right)[\varphi']\\+a_1\delta(x-q_1)[\varphi]+a_2\delta(x-q_2)[\varphi]=0,
\end{align*}
which is equivalent to
\begin{align*}
\delta(x-q_1)\left(\dot{p_1}-p_1\sum\limits_{j=1}^2p_je^{-|x-q_j|}sign(x-q_j)+a_1/2\right)[\varphi]\\+\delta(x-q_2)\left(\dot{p_2}-p_2\sum\limits_{j=1}^2p_je^{-|x-q_j|}sign(x-q_j)+a_2/2\right)[\varphi]\\
+\sum\limits_{i=1}^2\delta(x-q_i)p_i\left(\dot{q_i}-\sum\limits_{j=1}^2p_je^{-|x-q_j|}\right)[\varphi']=0.
\end{align*}
Since $\varphi$ is arbitrary, we obtain
\begin{align*}
\dot{p_i}&=\sum\limits_{j=1}^2p_ip_je^{-|q_i-q_j|}sign(q_i-q_j)-\frac{a_i}{2},\\
\dot{q_i}&=\sum\limits_{j=1}^2p_je^{-|q_i-q_j|},\ \ \mbox{for} \ \ i=1,2,
\end{align*}
in the sense of distributions. In other words, $(\ref{7})$ and $(\ref{10})$ are Hamilton's equations with Hamiltonian
\begin{align}\label{H}
	H_a(p,q)=\frac{1}{2}\left(p_1^2+2p_1p_2e^{-|q_1-q_2|}+p_2^2\right)+\frac{1}{2}\left(a_1q_1+a_2q_2\right).
\end{align}
\end{proof}
First, let us recall that $(\ref{3})$ is an integrable system (strictly speaking the
 system is Liouville integrable outside the singular set $q_1 = q_2$) and it possesses two independent first integrals: $H_0=p_1+p_2$ and $H_1=\frac{1}{2}\left(p_1^2+2p_1p_2e^{-|q_1-q_2|}+p_2^2\right)$. 
 
However, in our case $(\ref{7})-(\ref{10})$, the constants of motion are not preserved, due to the presence of time-dependent controls. Still, the Hamiltonian structure, with a Hamiltonian $H_a$ is preserved. 

All our effort in the present paper is related to a very particular situation. Namely, we consider only controls $a_1$ and $a_2$ satisfying $a_1=-a_2$. In particular, this way we make sure $(\ref{7})-(\ref{10})$ possesses a constant of motion, indeed $H^a=p_1+p_2$ is a first integral. We have the following result. 

\begin{rem}\label{2.1}
Let us consider the special case $a_2=-a_1$. Then $(\ref{H})$ reads
\begin{align*}
	H_a(p,q)=\frac{1}{2}\left(p_1^2+2p_1p_2e^{-|q_1-q_2|}+p_2^2\right)+\frac{a_1}{2}\left(q_1-q_2\right).
\end{align*}
Equivalently, Hamiltonian associated with $(\ref{7})-(\ref{10})$ is
\begin{align}\label{H_1}
	H_a(p_1,p_2,q_1,q_2)=\frac{1}{2}p^TE(q)p+\frac{a_1}{2}\left(q_1-q_2\right),
\end{align}
where $E(q)=\left[e^{-|q_i-q_j|}\right]_{i,j=1}^2$ and $p=\left(p_1,p_2\right)$, $q=\left(q_1,q_2\right)$.\\
\\
Now, in view of $(\ref{H_1})$, we see that the evolution $(\ref{7})-(\ref{10})$, with $a_1=-a_2$, is equivalent to
\begin{eqnarray}
	\dot{p_1} &=& p_1p_2e^{-|q_1-q_2|}-\frac{a_1}{2}\label{11},\\
	\dot{p_2} &=& -p_1p_2e^{-|q_1-q_2|}+\frac{a_1}{2}\label{12}, \\
	\dot{q_1} &=& p_1+p_2e^{-|q_1-q_2|}\label{13},\\
	\dot{q_2} &=& p_2+p_1e^{-|q_1-q_2|}\label{14}.
\end{eqnarray}
From $(\ref{11})$ and $(\ref{12})$, we notice that $p_1+p_2$=const.
\end{rem}
Now we are ready to state the main result of the paper. It concerns both, the controllability of a collision, as well as pointing the time-optimal control. Let us recall that a collision occurs when $q_1(t^*)=q_2(t^*)$ for a finite time $t^*$.
\begin{thm}\label{2.}
We assume that $a_1$ is a measurable function $a_1:[0,T]\rightarrow [-\alpha, \alpha]$. Let us consider solutions to $(\ref{11})-(\ref{14})$. Without loss of generality, assume $q_1(0)<q_2(0)$. Assume moreover that $\alpha\geq 1/2(p_1(0)+p_2(0))^2$. 

We say that a solution $(\ref{11})-(\ref{14})$ collides at time $t^*<\infty$ if $q_1(t^*)=q_2(t^*)$. Controllability of a collision holds for $\alpha\geq\frac{1}{2}\left(p_1(0)+p_2(0)\right)^2$. Moreover, the time-optimal control $a_1$ leading to the collision is constant and equal to $a_1=-\alpha$. 
\end{thm}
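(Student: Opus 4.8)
The plan is to exploit the conserved quantity $p_1+p_2=c$ to reduce the four-dimensional system \eqref{11}--\eqref{14} to a two-dimensional one, and then to read off controllability and optimality from the geometry of that planar system. Since $q_1(0)<q_2(0)$, set $r:=q_2-q_1\geq 0$ and $s:=p_1-p_2$, keeping $c=p_1+p_2$ fixed. Then $p_1p_2=\tfrac14(c^2-s^2)$ and, while $q_1<q_2$ (so $\operatorname{sign}(q_1-q_2)=-1$ and $e^{-|q_1-q_2|}=e^{-r}$), equations \eqref{11}--\eqref{14} give $\dot r=\dot q_2-\dot q_1=(p_2-p_1)(1-e^{-r})=-s(1-e^{-r})$ and $\dot s=\dot p_1-\dot p_2=2p_1p_2e^{-r}-a_1=\tfrac12(c^2-s^2)e^{-r}-a_1$. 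Collision means $r\to 0$. I would first record that at $r=0$ the quantity $H_1=\tfrac12(c^2-s^2e^{-r})$ degenerates and that the vector field is singular there, which is the source of the ``very singular'' behaviour the authors flag.

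Second, I would establish controllability. The target is the set $\{r=0\}$. Observe that $\dot r=-s(1-e^{-r})$, so the peakons approach each other exactly when $s>0$, i.e. $p_1>p_2$. The role of the control $a_1$ is to drive $s$ upward through $\dot s=\tfrac12(c^2-s^2)e^{-r}-a_1$; choosing $a_1=-\alpha$ maximises $\dot s$. The hypothesis $\alpha\geq\tfrac12 c^2$ is precisely what guarantees that even in the worst case the control term $+\alpha$ dominates the autonomous drift $\tfrac12(c^2-s^2)e^{-r}\leq\tfrac12 c^2\leq\alpha$, forcing $\dot s>0$ (strictly, away from the boundary) and hence $s$ to become and remain positive; once $s$ is bounded below by a positive constant on a set where $r$ is bounded away from $0$, one gets $\dot r\leq -\varepsilon<0$ and $r$ reaches $0$ in finite time. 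I would make this rigorous by a comparison/barrier argument on the pair $(r,s)$, showing the trajectory cannot be trapped away from $r=0$.

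Third, for the time-optimal claim I would argue by a monotonicity/comparison principle rather than by solving the Hamilton--Jacobi--Bellman equation directly (that being the route the paper itself develops later). The key structural fact is that $a_1$ enters only the $\dot s$ equation, and $r$ decreases faster when $s$ is larger, while $s$ grows fastest when $a_1$ is as negative as possible. So the plan is: fix any admissible control $a_1(\cdot)$ with collision time $t^*$, and compare against the constant control $a_1\equiv-\alpha$. Let $(r,s)$ be the trajectory under the candidate optimal control and $(\tilde r,\tilde s)$ under an arbitrary one, from the same initial data. Because $\dot s+a_1=\tfrac12(c^2-s^2)e^{-r}$ depends on the control only additively and $-\alpha$ is the extreme choice, a Gronwall-type comparison shows $s(t)\geq\tilde s(t)$ pointwise; feeding this into $\dot r=-s(1-e^{-r})$ and a second comparison yields $r(t)\leq\tilde r(t)$, so $r$ hits $0$ no later under $a_1\equiv-\alpha$. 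Hence the constant control $a_1=-\alpha$ is time-optimal and of no-switching form.

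The main obstacle I anticipate is the singularity of the reduced vector field at the collision locus $r=0$, where $1-e^{-r}\sim r$ vanishes and the monotone-comparison estimates must be carried out uniformly down to the target. Both the finite-time-arrival argument in step two and the two-stage comparison in step three require controlling the trajectory precisely in the regime $r\to 0^+$, where $\dot r\to 0$; I would need to verify that $s$ stays bounded below by a positive constant as $r\to 0$ so that $\int dr/(s(1-e^{-r}))$ converges, guaranteeing a genuinely finite collision time and legitimising the comparison. A secondary subtlety is confirming that the sign of $s$ is correctly oriented throughout (that the peakons do not overtake, keeping $q_1<q_2$ until the collision), so that the formula $e^{-|q_1-q_2|}=e^{-r}$ and $\operatorname{sign}(q_1-q_2)=-1$ remain valid along the whole trajectory.
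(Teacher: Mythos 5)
Your reduction to the plane is the same one the paper performs (your $(r,s,c)$ are its $(-2s_2,\tilde{p_2},\tilde{p_1})$), but your momentum equation carries a sign error that is fatal to the argument. You took $(\ref{11})$--$(\ref{12})$ literally and obtained $\dot s=\tfrac12(c^2-s^2)e^{-r}-a_1$; however $(\ref{11})$--$(\ref{12})$ omit the factor $\operatorname{sign}(q_1-q_2)=-1$ present in $(\ref{7})$, and the system the paper actually analyses (and the true peakon dynamics) is $(\ref{20})$, i.e. $\dot s=\tfrac12(s^2-c^2)e^{-r}-a_1$. The sign is not cosmetic. With your sign, $\dot s\le\tfrac12 c^2+\alpha$ for \emph{every} admissible control, so $s$ grows at most linearly; then $\dot r=-s(1-e^{-r})\ge -M_T\,r$ on any bounded interval $[0,T]$, hence $r(t)\ge r(0)e^{-M_T t}>0$ and no control produces a collision in finite time --- the theorem would simply be false for the system you wrote down. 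With the correct sign and $a_1=-\alpha$, $\alpha\ge\tfrac12 c^2$, one gets the Riccati inequality $\dot s\ge\tfrac12 s^2e^{-r}$ (this is $(\ref{16})$), and the resulting finite-time blow-up of $s$ is the whole mechanism of collision. This also exposes a second, independent error in your controllability step: your stated verification criterion --- that $s$ ``stays bounded below by a positive constant as $r\to0$'' so that $\int dr/(s(1-e^{-r}))$ converges --- is mathematically wrong, since with $s$ merely bounded below one has $\int_0^{r_0}dr/\bigl(s(1-e^{-r})\bigr)\ge C\int_0^{r_0}dr/r=\infty$. Finite collision time forces $s\to\infty$ at a definite rate; this is exactly what Remark \ref{3.2} records, and what the paper's Proposition \ref{23} proves quantitatively by feeding the blow-up bound $(\ref{17})$ into $(\ref{22})$ to obtain $(\ref{18})$. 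Your barrier argument ($\dot r\le-\varepsilon$ away from $r=0$) only yields exponential approach to the target, never arrival.

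The optimality half has a further gap even after the signs are fixed. Your two-stage Gronwall comparison (first $s(t)\ge\tilde s(t)$, then $r(t)\le\tilde r(t)$) tacitly requires the planar system to be quasimonotone (cooperative) in the variables $(s,-r)$, and it is not: $\partial_{(-r)}\bigl[\tfrac12(s^2-c^2)e^{-r}\bigr]=\tfrac12(s^2-c^2)e^{-r}<0$ wherever $s^2<c^2$, a strip every trajectory with small initial $p_1-p_2$ must cross. There, having gained on $r$ makes the drift of $s$ \emph{more negative}, so at a first-crossing time where $s=\tilde s$ and $r\le\tilde r$ one can have $\dot s<\dot{\tilde s}$ and the ordering reverses; the induction does not close, and the Kamke condition needed for a coupled comparison theorem fails. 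This is precisely the obstruction that pushes the paper away from trajectory-comparison and switching-function arguments toward dynamic programming: the monotonicity you want is $(\ref{t})$, $\partial\tilde V/\partial\tilde{p_2}<0$, and the paper obtains it not by comparing two controlled trajectories but by integrating explicitly along the characteristics of the uncontrolled-extreme flow $a_1=-\alpha$ (formulas $(\ref{46})$--$(\ref{47})$ in Lemma \ref{4.1}), after which the argmin step of Lemma \ref{4.3} delivers optimality of the no-switching control. To salvage your route you would need either to restrict to data with $s(0)^2\ge c^2$ (where cooperativity holds and your comparison is legitimate) or to replace the Gronwall step by an argument immune to the loss of quasimonotonicity; as written, ``a Gronwall-type comparison shows $s(t)\ge\tilde s(t)$'' is unjustified.
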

\section{Controllability of the problem}

In this section, we discuss a controllability of a collision of solution to $(\ref{11})-(\ref{14})$. In particular, we prove the first part of our main Theorem \ref{2.}. This is achieved thanks to the change of variables which reduces the complexity of the problem essentially. The above-mentioned change of varibales is also extremely helpful in our proof of the time-optimal control.
\\
Let us once more recall what is understood by a collision. A two-peakon $u(x,t)$, given by (\ref{2}), collides at time $t^*>0$ if $q_1(t^*)=q_2(t^*)$. If this happens then $t^*$ is referred to as a collision time and $q^*=q_1(t^*)=q_2(t^*)$ is referred to as the corresponding collision point.
In order to show controllability of the problem, it is sufficient to pick up such a control $a_1$, for which $q_1$ and $q_2$ satisfying (\ref{11})-(\ref{14}), satisfy $q_1-q_2\to 0$ for some fixed time $t^*<\infty$.

Problem $(\ref{11})-(\ref{14})$ is four dimensional. We introduce a particular change of variables which reduces $(\ref{11})-(\ref{14})$ to the plane problem. Let us define new coordinates (as in \cite{CGKM})
\begin{align}\label{C}
s_1:=\frac{q_1+q_2}{2}, \ s_2:=\frac{q_1-q_2}{2}.
\end{align}
Without loss of generality, consider the situation $q_1(0)<q_2(0)$. In new coordinates $(\ref{C})$, occurence of a collision in time $t^*$ means $s_2(t^*)=0$. 

Before stating the reduction lemma, let us speak a few words about its proof. Actually the proof could be easier. We could simply introduce the corresponding change of variables in $p$ and check that the corresponding equations are satisfied. Instead, let us provide a slightly more complicated proof, which on the other hand, follows our reasoning. Our change of variables was done at the Lagrangian level. Then, the proper covariables occurred in a natural way, when going back from Lagrangian to Hamiltonian coordinates.
  
We observe that the evolution $(\ref{11})-(\ref{14})$ is equivalent to the following evolution in new coordinates.
\begin{proposition}\label{8}
Let $q_1(0)<q_2(0)$, $(\ref{11})-(\ref{14})$ in coordinates $s_1, s_2$ transforms to
\begin{eqnarray} \dot{\tilde{p_1}}&=&0,\label{19}\\\dot{\tilde{p_2}}&=&\frac{1}{2}\left(\tilde{p_2}^2-\tilde{p_1}^2\right)e^{2s_2}-a_1,\label{20}\\\dot{s_1}&=&\frac{\tilde{p_1}}{2}\left(1+e^{2s_2}\right),\label{21}\\\dot{s_2}&=&\frac{\tilde{p_2}}{2}\left(1-e^{2s_2}\right)\label{22},
\end{eqnarray}
where $\tilde{p_1}$ and $\tilde{p_2}$ are as mentioned in \autoref{9}. In particular, $\tilde{p_1}$ is the constant of motion.
\end{proposition}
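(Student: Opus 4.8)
The plan is to read \eqref{C} as a linear point transformation on the configuration variables and to lift it to a symplectic transformation of the full phase space, so that the claimed system \eqref{19}--\eqref{22} becomes nothing but Hamilton's equations for the transformed Hamiltonian. The inverse of \eqref{C} is $q_1=s_1+s_2$, $q_2=s_1-s_2$, and I would fix the conjugate momenta by requiring preservation of the canonical one-form, $p_1\,dq_1+p_2\,dq_2=\tilde{p_1}\,ds_1+\tilde{p_2}\,ds_2$. Since $dq_1=ds_1+ds_2$ and $dq_2=ds_1-ds_2$, this forces $\tilde{p_1}=p_1+p_2$ and $\tilde{p_2}=p_1-p_2$, which are exactly the covariables of \autoref{9}. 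Following the authors' preferred route, the same covariables appear by carrying out the change of variables at the Lagrangian level and reading off $\tilde{p_k}=\sum_i(\partial q_i/\partial s_k)\,p_i$ from the new Lagrangian; either way the map is the cotangent lift of a point transformation, hence time-independent and symplectic.

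Next I would rewrite the Hamiltonian \eqref{H_1} in the new variables. Because $q_1(0)<q_2(0)$, and by definition of the collision time this ordering persists until $s_2$ first reaches $0$, on the relevant interval one has $|q_1-q_2|=q_2-q_1=-2s_2$, so $e^{-|q_1-q_2|}=e^{2s_2}$, while $q_1-q_2=2s_2$. Substituting $p_1=\tfrac12(\tilde{p_1}+\tilde{p_2})$ and $p_2=\tfrac12(\tilde{p_1}-\tilde{p_2})$ into \eqref{H_1}, together with $p_1^2+p_2^2=\tfrac12(\tilde{p_1}^2+\tilde{p_2}^2)$ and $2p_1p_2=\tfrac12(\tilde{p_1}^2-\tilde{p_2}^2)$, gives the reduced Hamiltonian
\begin{align*}
\tilde{H}_a=\frac14\left(\tilde{p_1}^2+\tilde{p_2}^2\right)+\frac14\left(\tilde{p_1}^2-\tilde{p_2}^2\right)e^{2s_2}+a_1 s_2 .
\end{align*}

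Since the transformation is symplectic and time-independent, the evolution in the coordinates $(\tilde{p_1},\tilde{p_2},s_1,s_2)$ is again governed by Hamilton's equations for $\tilde{H}_a$, and I would simply compute the four partial derivatives. This yields $\dot{s_1}=\partial\tilde{H}_a/\partial\tilde{p_1}=\tfrac{\tilde{p_1}}2(1+e^{2s_2})$, $\dot{s_2}=\partial\tilde{H}_a/\partial\tilde{p_2}=\tfrac{\tilde{p_2}}2(1-e^{2s_2})$, $\dot{\tilde{p_1}}=-\partial\tilde{H}_a/\partial s_1=0$, and $\dot{\tilde{p_2}}=-\partial\tilde{H}_a/\partial s_2=\tfrac12(\tilde{p_2}^2-\tilde{p_1}^2)e^{2s_2}-a_1$, which are precisely \eqref{21}, \eqref{22}, \eqref{19} and \eqref{20}. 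Since $\tilde{H}_a$ is independent of $s_1$, the equation $\dot{\tilde{p_1}}=0$ shows that $\tilde{p_1}=p_1+p_2$ is conserved, recovering the first integral already noted in \autoref{2.1}. (One could instead differentiate $\tilde{p_1},\tilde{p_2},s_1,s_2$ directly along \eqref{11}--\eqref{14}, the ``easier'' check the authors mention, but the reduction above is cleaner and makes the Hamiltonian structure explicit.)

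The step I expect to be most delicate is checking that the reduction transports the control term faithfully. The system \eqref{11}--\eqref{14} is Hamiltonian only in a time-dependent sense, the time dependence entering through the measurable forcing $a_1(t)$ in the linear ``potential'' $\tfrac{a_1}{2}(q_1-q_2)$; the point to verify is that, because \eqref{C} lifts to a \emph{time-independent} symplectic map, the transformed equations remain Hamilton's equations for the $\tilde{H}_a$ obtained by substitution, with the same forcing and no spurious terms. A second, minor, subtlety is the non-smoothness of $|q_1-q_2|$: all computations must be confined to the open region $q_1<q_2$, where $e^{-|q_1-q_2|}=e^{2s_2}$ is smooth, and one notes that this region is exactly the time interval $[0,t^*)$ up to the collision, so the reduced system \eqref{19}--\eqref{22} is valid on the whole interval of interest.
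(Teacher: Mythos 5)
Your proof is correct, and it takes a genuinely different route from the paper's. The paper works through a double Legendre transform: it passes from the Hamiltonian $(\ref{H_1})$ to the Lagrangian $L=\frac{1}{2}\dot{q}^TE^{-1}(q)\dot{q}-\frac{a_1}{2}(q_1-q_2)$, transforms the kinetic-energy matrix $E^{-1}(q)$ into the diagonal matrix $A(s_2)$ in the new coordinates (citing \cite[Theorem 3.1]{CGKM}), and then Legendre-transforms back, so that the covariables $\tilde{p_1},\tilde{p_2}$ arise implicitly as the momenta of the new Lagrangian; their identification $\tilde{p_1}=p_1+p_2$, $\tilde{p_2}=p_1-p_2$ is established separately (in \autoref{9}) by comparing $\dot{q}=E(q)p$ with $(\ref{q_1})$--$(\ref{q_2})$. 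You instead lift the point transformation $(\ref{C})$ directly to the cotangent level by requiring $p_1\,dq_1+p_2\,dq_2=\tilde{p_1}\,ds_1+\tilde{p_2}\,ds_2$, which produces the momenta identification at once, and then you substitute into $(\ref{H_1})$ and invoke the invariance of Hamilton's equations under a time-independent symplectic map. Your route avoids inverting $E(q)$ and $A(s_2)$ and dispenses with the Lagrangian detour altogether, so it is shorter and keeps the momenta explicit from the start; what the paper's longer path buys is fidelity to the geometric (metric/geodesic) picture of \cite{CGKM} that motivated the change of variables in the first place, which is why the authors say they deliberately chose the "more complicated" proof. The two delicate points you flag are real and handled correctly: the control enters only through a time-dependent potential, so a time-independent canonical transformation introduces no extra terms (the equations hold in the Carath\'eodory sense for measurable $a_1$), and the restriction to the region $q_1<q_2$, where $e^{-|q_1-q_2|}=e^{2s_2}$ is smooth, is exactly the interval up to the first collision — the same restriction under which the paper replaces $e^{-2|s_2|}$ by $e^{2s_2}$ and under which the division by $1\pm e^{q_1-q_2}$ in \autoref{9} is legitimate.
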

\begin{rem}\label{9}
Let us consider $\tilde{p_1}$, $\tilde{p_2}$ from \autoref{8}. Then, $\tilde{p_1}=p_1+p_2$ and $\tilde{p_2}=p_1-p_2$.
\end{rem}
\begin{proof}
In view of $(\ref{C})$, we obtain from $(\ref{21})$ and $(\ref{22})$ that
\begin{align}
\dot{q_1}+\dot{q_2}&=\tilde{p_1}\left(1+e^{(q_1-q_2)}\right),\label{q_1}\\\dot{q_1}-\dot{q_2}&=\tilde{p_2}\left(1-e^{(q_1-q_2)}\right)\label{q_2}.
\end{align}
Since $\dot{q}=E(q)p$, we have
\begin{align*}
\begin{bmatrix}
\dot{q_1}\\\dot{q_2}
\end{bmatrix}=\begin{bmatrix}
1&e^{(q_1-q_2)}\\e^{(q_1-q_2)}&1
\end{bmatrix}\begin{bmatrix}
p_1\\p_2
\end{bmatrix}.
\end{align*}
Owing to $(\ref{q_1}),(\ref{q_2})$, we arrive at
\begin{align*}
\left(1+e^{(q_1-q_2)}\right)(p_1+p_2)&=\left(1+e^{(q_1-q_2)}\right)\tilde{p_1},
\end{align*}
which implies
\begin{align*}
\tilde{p_1}=p_1+p_2.
\end{align*}
Moreover, similarly
\begin{align*}
	\left(1-e^{(q_1-q_2)}\right)(p_1-p_2)&=\left(1-e^{(q_1-q_2)}\right)\tilde{p_2},
\end{align*}
hence
\begin{align*}
\tilde{p_2}=p_1-p_2.	
\end{align*}
\end{proof}
\begin{proof}[Proof of \autoref{8}]
The Lagrangian associated with the Hamiltonian $H_a$ by the Legendre transform reads
\begin{align*}
	L(q_1,q_2,\dot{q_1},\dot{q_2})=\frac{1}{2}\dot{q}^TE^{-1}(q)\dot{q}-
	\frac{a_1}{2}(q_1-q_2).
\end{align*}
Note that $E^{-1}(q):=\frac{1}{1-e^{-2|q_1-q_2|}}\begin{bmatrix}
	1 & -e^{-|q_1-q_2|}\\-e^{-|q_1-q_2|} & 1
\end{bmatrix}$.\\
\\
Next, $E^{-1}(q)$ in new coordinates is (see \cite[Theorem 3.1]{CGKM})
\begin{align*}
	A(s_2)&=\frac{1}{1-e^{-4|s_2|}}\begin{bmatrix}
	\frac{\partial q_1}{\partial s_1} & \frac{\partial q_1}{\partial s_2}\\\frac{\partial q_2}{\partial s_1} &  \frac{\partial q_1}{\partial s_2}
	\end{bmatrix}^T\begin{bmatrix}
	1 & -e^{-2|s_2|}\\-e^{-2|s_2|} & 1
	\end{bmatrix}\begin{bmatrix}
	\frac{\partial q_1}{\partial s_1} & \frac{\partial q_1}{\partial s_2}\\\frac{\partial q_2}{\partial s_1} &  \frac{\partial q_1}{\partial s_2}
	\end{bmatrix}\\&=\begin{bmatrix}
	\frac{2}{1+e^{-2|s_2|}} & 0\\0 & \frac{2}{1-e^{-2|s_2|}}
	\end{bmatrix},
\end{align*}
and new form of a Lagrangian is
\begin{align}\label{15}
	L(s_1,s_2,\dot{s_1},\dot{s_2})=\frac{1}{2}\dot{s}^TA(s_2)\dot{s}-a_1s_2.
\end{align}
Next, via the Legendre transform we obtain that a Hamiltonian corresponding to $(\ref{15})$ is
\begin{align*}
	H(\tilde{p_1},\tilde{p_2},s_1,s_2)&=\frac{1}{2}\tilde{p}^TA^{-1}(s_2)\tilde{p}+a_1 s_2,\\&=\frac{1}{2}\begin{pmatrix}
	\tilde{p_1}&\tilde{p_2}
	\end{pmatrix}\begin{bmatrix}
	\frac{1+e^{-2|s_2|}}{2}&0\\0&\frac{1-e^{-2|s_2|}}{2}
	\end{bmatrix}\begin{pmatrix}
	\tilde{p_1}\\\tilde{p_2}
	\end{pmatrix}+a_1 s_2,\\&=\frac{1}{2}\left(\tilde{p_1}^2\frac{1+e^{-2|s_2|}}{2}+\tilde{p_2}^2\frac{1-e^{-2|s_2|}}{2}\right)+a_1 s_2.
\end{align*}
Since $q_1<q_2$, we have $s_2<0$. Then the Hamiltonian equations corresponding to $H(\tilde{p_1},\tilde{p_2},s_1,s_2)$ read
\begin{eqnarray*}
\dot{\tilde{p_1}}&=&-\frac{\partial H}{\partial s_1},\\
\dot{\tilde{p_2}}&=&-\frac{\partial H}{\partial s_2}=\frac{1}{2}\left(\tilde{p_2}^2-\tilde{p_1}^2\right)e^{2s_2}-a_1, \\
\dot{s_1} &=&\frac{\partial H}{\partial\tilde{p_1}}=\frac{\tilde{p_1}}{2}\left(1+e^{2s_2}\right),\\
\dot{s_2}&=&\frac{\partial H}{\partial\tilde{p_2}}=\frac{\tilde{p_2}}{2}\left(1-
e^{2s_2}\right).
\end{eqnarray*}
Since $H$ does not depend on $s_1$, we obtain that $\tilde{p_1}(t)=\tilde{p_1}(0)$. In other words, $\tilde{p_1}$ is the constant of motion.
\end{proof}
We are now in a position to prove controllabilty of a collision of $(\ref{11})-(\ref{14})$. To this end we use the reduced system $(\ref{19})-(\ref{22})$. Notice that to obtain a collision, we need to observe $s_2(t^*)=0$ for $t^*<\infty$. Since $\tilde{p_1}$ is constant, the evolution of $s_2$ is fully covered by $(\ref{20})$ and $(\ref{22})$. Thus all the information concerning collision is contained in $(\ref{20})$ and $(\ref{22})$. Hence, our problem reduces to a 2D problem and it is much easier to handle.
\begin{proposition}\label{23}
Let us assume that $q_1(0)<q_2(0)$. Moreover, consider 
\[
\alpha\geq\frac{1}{2}\tilde{p_1}^2(0)=\frac{1}{2}\left(p_1(0)+p_2(0)\right)^2.
\] 
Then, the collision of a solution to $(\ref{11})-(\ref{14})$ is controllable. In other words, there exists a control $-\alpha\leq a_1(t)\leq \alpha$ and a time $\tau<\infty$, such that a two-peakon satisfying $(\ref{11})-(\ref{14})$, collides.
\end{proposition}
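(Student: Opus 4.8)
The plan is to exhibit the constant control $a_1\equiv-\alpha$ and to prove that, with this choice, the reduced quantity $s_2$ of \autoref{8} reaches $0$ in finite time. Throughout I write $c:=\tilde{p_1}(0)=p_1(0)+p_2(0)$, which by \autoref{9} is conserved, and I use that $q_1(0)<q_2(0)$ forces $s_2(0)<0$, so that $e^{2s_2}\in(0,1)$ along the trajectory as long as $s_2<0$. Since $s_1$ is slaved (equation $(\ref{21})$ does not feed back into $(\ref{20})$ or $(\ref{22})$, and $s_1$ appears in neither), the whole question reduces to the planar system $(\ref{20})$, $(\ref{22})$ in the variables $(\tilde{p_2},s_2)$, whose right-hand side is smooth for $s_2<0$, giving local existence and uniqueness up to a maximal time.

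First I would show $\tilde{p_2}$ is non-decreasing. Substituting $a_1=-\alpha$ into $(\ref{20})$ and invoking $\alpha\ge\tfrac12 c^2$, I split
\[
\dot{\tilde{p_2}}=\tfrac12\tilde{p_2}^2e^{2s_2}+\tfrac12 c^2\bigl(1-e^{2s_2}\bigr)+\bigl(\alpha-\tfrac12 c^2\bigr),
\]
a sum of three non-negative terms whenever $s_2<0$, so $\dot{\tilde{p_2}}\ge0$. While $\tilde{p_2}\le0$ the last two terms already force $\dot{\tilde{p_2}}\ge\tfrac12 c^2\bigl(1-e^{2s_2(0)}\bigr)>0$ (or $\ge\alpha>0$ when $c=0$), a rate bounded below by a positive constant; hence $\tilde{p_2}$ becomes strictly positive at some finite time $t_1$, and since $\dot{s_2}$ stays bounded on this short phase, $s_2$ remains finite and negative. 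Once $\tilde{p_2}(t_1)>0$, monotonicity keeps $\tilde{p_2}>0$, so by $(\ref{22})$ the variable $s_2$ is increasing and confined to $[s_2(t_1),0)$. Then $e^{2s_2}\ge e^{2s_2(t_1)}=:\beta>0$, and the first term above gives the Riccati lower bound $\dot{\tilde{p_2}}\ge\tfrac{\beta}{2}\tilde{p_2}^2$; comparison shows $\tilde{p_2}$ blows up at a finite time $T_b<\infty$.

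It remains to see that the collision $s_2=0$ is actually attained, and at $T_b$. As $s_2$ is increasing and bounded above by $0$, the limit $s_2^{*}:=\lim_{t\to T_b^-}s_2(t)\le0$ exists. The delicate point, which I expect to be the main obstacle, is the degenerate competition near collision: in $\dot{s_2}=\tfrac{\tilde{p_2}}{2}(1-e^{2s_2})$ one factor blows up while the other tends to $0$ if $s_2^{*}=0$, so it is not a priori clear that $s_2$ reaches $0$ in finite rather than infinite time. I would resolve this by contradiction. If $s_2^{*}<0$, then $1-e^{2s_2}\ge\kappa>0$ throughout, so $\dot{s_2}\ge\tfrac{\kappa}{2}\tilde{p_2}$. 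On the other hand, from $e^{2s_2}<1$ one has the crude bound $\dot{\tilde{p_2}}\le\tfrac12\tilde{p_2}^2+\alpha\le\tilde{p_2}^2$ once $\tilde{p_2}\ge\sqrt{2\alpha}$; integrating $\tfrac{d}{dt}\bigl(\tilde{p_2}^{-1}\bigr)\ge-1$ up to $T_b$, where $\tilde{p_2}^{-1}\to0$, yields the sharp rate $\tilde{p_2}(t)\ge(T_b-t)^{-1}$, whence $\int^{T_b}\tilde{p_2}\,dt=\infty$. Combined with $\dot{s_2}\ge\tfrac{\kappa}{2}\tilde{p_2}$ this would force $s_2\to+\infty$, contradicting $s_2<0$. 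Therefore $s_2^{*}=0$, i.e.\ the two-peakon collides at the finite time $\tau=T_b$, which establishes controllability whenever $\alpha\ge\tfrac12\tilde{p_1}^2(0)=\tfrac12\bigl(p_1(0)+p_2(0)\bigr)^2$.
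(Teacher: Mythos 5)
Your proof is correct, and its skeleton coincides with the paper's own proof of Proposition \ref{23}: choose a constant admissible control, show that $\tilde{p_2}$ crosses zero in finite time, obtain Riccati-type blow-up of $\tilde{p_2}$, and conclude that $s_2$ reaches $0$. The two arguments diverge in the places where the real work is done. First, the paper takes the constant control $a_1=-\frac{1}{2}\tilde{p_1}^2(0)$ (the threshold value) rather than $a_1=-\alpha$, so its inequality $(\ref{16})$ lacks your third term $\alpha-\frac{1}{2}c^2$, and it then asserts directly that positivity of $\dot{\tilde{p_2}}$ yields a finite crossing time $t^*$. Your version is more careful here (the uniform positive lower bound on $\dot{\tilde{p_2}}$ while $\tilde{p_2}\le 0$, which implicitly uses that $s_2$ is non-increasing on that phase, a one-line consequence of $(\ref{22})$), and in fact more robust: in the degenerate case $c=\tilde{p_1}(0)=0$ with $\tilde{p_2}(0)<0$, the paper's control becomes $a_1=0$, the reduced Hamiltonian $\frac{1}{4}\tilde{p_2}^2\left(1-e^{2s_2}\right)$ is conserved, and $\tilde{p_2}$ never reaches zero, so the paper's particular control does not produce a collision there, whereas your extra $+\alpha$ term closes exactly that case. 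Second, the crux --- that $s_2$ attains $0$ in finite time rather than only asymptotically --- is settled by genuinely different mechanisms: the paper integrates $\int ds_2/(1-e^{2s_2})=\frac{1}{2}\int\tilde{p_2}\,dt$ against its Riccati lower bound $(\ref{17})$ to obtain the closed-form inequality $(\ref{18})$, whose right-hand side vanishes no later than $\bar{t}+2e^{-2s_2(\bar{t})}/\tilde{p_2}(\bar{t})$, thereby giving an explicit upper bound on the collision time; you instead argue by contradiction at the blow-up time $T_b$, using the upper Riccati bound $\dot{\tilde{p_2}}\le\tilde{p_2}^2$ to force $\tilde{p_2}(t)\ge(T_b-t)^{-1}$, hence $\int^{T_b}\tilde{p_2}\,dt=\infty$, which is incompatible with $s_2$ remaining below a negative level. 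Your route is softer (no explicit quadratures) and identifies the collision time with the blow-up time, consistently with Remark \ref{3.2}; the paper's computation buys a quantitative collision-time estimate, which is also the form of information reused later in the dynamic-programming part.
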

\begin{proof}
We have already noticed that $\tilde{p_1}(t)=\tilde{p_1}(0)$. If we choose $a_1=-\frac{1}{2}\tilde{p_1}^2(0)$ then in view of $(\ref{20})$
\begin{align}\label{16}
	\dot{\tilde{p_2}}=\frac{1}{2}\tilde{p_2}^2(t)e^{2s_2}+\frac{1}{2}\tilde{p_1}^2(0)\left(1-e^{2s_2}\right)>0.
\end{align}
This implies $\tilde{p_2}$ is increasing in time and there exists a time $t^*$ such that $\tilde{p_2}(t^*)=0$. Hence $\tilde{p_2}(t)>\tilde{p_2}(\bar{t})>0$ for $t>\bar{t}>t^*$ and in view of $(\ref{16})$ and $(\ref{22})$, we obtain
\begin{align*}
	\dot{\tilde{p_2}}(t)\geq \frac{1}{2}\tilde{p_2}^2(t)e^{2s_2(t)}\geq \frac{1}{2}\tilde{p_2}^2(t)e^{2s_2(\bar{t})}.
\end{align*}
Integrating the above estimate from $\bar{t}$ to $t$ we get
\begin{align}\label{17}
	\tilde{p_2}(t)\geq \frac{1}{\frac{1}{\tilde{p_2}(\bar{t})}-\frac{1}{2}e^{2s_2(\bar{t})}(t-\bar{t})}.
\end{align}
Next, $(\ref{22})$ and $(\ref{17})$ yield
\begin{align*}
	 \int_{s_2(\bar{t})}^{s_2(t)}\frac{ds_2}{1-e^{2s_2}}=\frac{1}{2}\int_{\bar{t}}^{t}\tilde{p_2}(s)ds\geq\frac{1}{2}\int_{\bar{t}}^{t}\frac{1}{\frac{1}{\tilde{p_2}(\bar{t})}-\frac{1}{2}e^{2s_2(\bar{t})}(s-\bar{t})}ds.
\end{align*}
After integration we get
\begin{align}
\ln\left(e^{-2s_2(t)}-1\right)&\leq \ln\left(e^{-2s_2(\bar{t})}-1\right)+2e^{-2s_2(\bar{t})}\left(\ln\left(\frac{1}{\tilde{p_2}(\bar{t})}-\frac{1}{2}e^{2s_2(\bar{t})}(t-\bar{t})\right)+\ln\tilde{p_2}(\bar{t})\right),\nonumber\\&\leq \ln\left(e^{-2s_2(\bar{t})}-1\right)+\ln\left(1-\frac{1}{2}\tilde{p_2}(\bar{t})e^{2s_2(\bar{t})}(t-\bar{t})\right)^{2e^{-2s_2(\bar{t})}},\nonumber\\ \mbox{i.e.}\ \ \left(e^{-2s_2(t)}-1\right)&\leq \left(e^{-2s_2(\bar{t})}-1\right)\left(1-\frac{1}{2}\tilde{p_2}(\bar{t})e^{2s_2(\bar{t})}(t-\bar{t})\right)^{2e^{-2s_2(\bar{t})}}\label{18}.
\end{align}
Now from inequality $(\ref{18})$, we observe that if $t$ is not later than $\bar{t}+\frac{2e^{-2s_2(\bar{t})}}{\tilde{p_2}(\bar{t})}$ then the right-hand side of $(\ref{18})$ equals $0$. That is equivalent to say $s_2(\tau)=0$ or $q_1(\tau)=q_2(\tau)$ where $\tau\leq \bar{t}+\frac{2e^{-2s_2(\bar{t})}}{\tilde{p_2}(\bar{t})}$. This implies the controllability of a collision of $(\ref{11})-(\ref{14})$.
\end{proof}
Let us notice a useful fact that at a collision time $\tau$, $\tilde{p_2}(\tau)=+\infty$. Indeed, we have the following remark.
\begin{rem}\label{3.2}
Assume $\tau$ is a collision time in $(\ref{41})$. In particular $s_2(\tau)=0$. Then $\tilde{p_2}(\tau)=\infty$.
\end{rem}
\begin{proof}
	Integrating $(\ref{22})$ on interval $[0,\tau]$, we obtain
\begin{align}\label{ln}
	\ln \left(e^{-2s_2(0)}-1\right)-\ln\left(e^{-2s_2(\tau)-1}\right)=\int_{\tilde{p_2}(0)}^{\tilde{p_2}(\tau)}\tilde{p_2} \ dt.
\end{align}
Since $s_2(t)=0$ at collision time $\tau$, $\lim_{t\to\tau}\ln\left(e^{-2s_2(t)-1}\right)=-\infty$. Hence $(\ref{ln})$ yields
\begin{align*}
	\int_{\tilde{p_2}(0)}^{\tilde{p_2}(\tau)}\tilde{p_2} \ dt=\infty,
\end{align*}
and we see that $\tilde{p_2}(\tau)=\infty$. This concludes that $\lim_{t\to\tau}\tilde{p_2}(t)=\infty$.
\end{proof}
\section{Time-Optimal control}
In the previous section, we have shown that a collision of $(\ref{11})-(\ref{14})$ is fully controllable. It is enough to pick up a constant control to this end. This section is devoted to the discussion of a time-optimal control $a_1\in[-\alpha,\alpha]$. We shall show that a constant strategy $a_1=-\alpha$ is a time-optimal control. This way, we finish the proof of Theorem \ref{2.}. 

We have already reduced $(\ref{11})-(\ref{14})$ into the following plane problem
\begin{eqnarray}
\dot{\tilde{p_2}}&=&\frac{1}{2}\left(\tilde{p_2}^2-\tilde{p_1}^2(0)\right)e^{2s_2}-a_1,\label{41} \\
\dot{s_2}&=&\frac{\tilde{p_2}}{2}\left(1-e^{2s_2}\right),\label{41.5}\\ \tilde{p_2}(0)&=&\tilde{p_2}^*, \  s_2(0)=s_2^*,
\label{41.6}
\end{eqnarray}
where $a_1\in[-\alpha,\alpha]$ and $\alpha\geq\frac{1}{2}\tilde{p_1}^2(0)$.\\
\\
Our task is to find a time-optimal control leading to a collision. Although $(\ref{41})-(\ref{41.6})$ is a plane problem, it seems that the approach of Sussmann, Piccoli et al. (see \cite{HS}, \cite{BS} and \cite{BP}) is not straightforward due to the singularity of $(\ref{41})$. Indeed, we carry a solution of $(\ref{41})-(\ref{41.6})$ to a position $\left(\tilde{p_2},s_2\right)=\left(+\infty, 0\right)$. Instead of adjusting the switching function approach of Sussmann, we perform a dynamic programming analysis.

Let us also mention that the result we arrive at is perhaps not surprising. The time-optimal control is such that we pull the first peak up, as much as it's possible, and at the same time, push the second peak down as much, as it gets. So, we try to adjust the situation to the configuration, which allows a collision (\cite{BSS}). However, multipeakon's equations are highly nonlinear and it is initially not clear why the nonlinear effects cannot prevent the mentioned strategy. Our theorem proves that they can't. 

We shall prove our result in several steps. First, we notice that if the time-optimal strategy exists, it satisfies the corresponding Hamilton-Jacobi-Bellman equation. Let us first introduce the value function.   
\begin{definition}\label{4.0}
Let us define a time-minimum function $V:\mathbb{R}\times(-\infty,0]\to[0,\infty)$ by
\begin{align*}
V(\tilde{p_2}^*,s_2^*):=\min\left\{\tau\geq 0 \ | \  s_2(\tau)=0 \ \mbox{and} \ s_2 \ \mbox{is a solution of}\ (\ref{41})-(\ref{41.6})\right\}.
\end{align*}
\end{definition}
\begin{rem}\label{4.}
From \autoref{3.2}, $\tilde{p_2}=\infty$ and $s_2=0$ at the point of collision. Hence we see that $V(\infty,0)=0$.
\end{rem}
Next, following Bellman's strategy, we arrive at the corresponding Hamilton-Jacobi-Bellman boundary problem.
\begin{proposition}
Let us consider time-optimal control problem $(\ref{41})-(\ref{41.6})$. Then, if the time-optimal strategy exists, the value function $V$ satisfies the following problem
\begin{eqnarray}
\frac{\tilde{p_2}}{2}(1-e^{2s_2})\frac{\partial V}{\partial s_2}+\frac{1}{2}\left(\tilde{p_2}^2-\tilde{p_1}^2(0)\right)e^{2s_2}\frac{\partial V}{\partial \tilde{p_2}}-\alpha \left|\frac{\partial V}{\partial \tilde{p_2}}\right|+1=0,\label{42}\\V(\infty,0)=0.\label{v}
\end{eqnarray}
\end{proposition}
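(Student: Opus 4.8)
The plan is to obtain (\ref{42}) as the infinitesimal form of Bellman's principle of optimality and to read off the boundary condition (\ref{v}) directly from \autoref{4.}. First I would fix an initial state $(\tilde{p_2}^*, s_2^*)$ and a small time $h>0$, and invoke the dynamic programming principle by splitting an optimal trajectory into its behaviour on $[0,h]$ and on $[h,\tau]$. Since $V$ measures the remaining time to collision and the cost accrued on $[0,h]$ is exactly $h$, this gives
\begin{align*}
V(\tilde{p_2}^*, s_2^*) = \min_{a_1 \in [-\alpha, \alpha]} \left\{ h + V\big(\tilde{p_2}(h), s_2(h)\big) \right\},
\end{align*}
where $(\tilde{p_2}(\cdot), s_2(\cdot))$ solves (\ref{41})-(\ref{41.6}) with the control $a_1$ held constant on $[0,h]$.

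Assuming $V$ is $C^1$ near $(\tilde{p_2}^*, s_2^*)$, I would Taylor-expand to first order, using $\tilde{p_2}(h) = \tilde{p_2}^* + h\,\dot{\tilde{p_2}} + o(h)$ and $s_2(h) = s_2^* + h\,\dot{s_2} + o(h)$. Substituting, cancelling $V(\tilde{p_2}^*, s_2^*)$ from both sides, dividing by $h$ and letting $h \to 0^+$ yields
\begin{align*}
0 = \min_{a_1 \in [-\alpha, \alpha]} \left\{ 1 + \dot{\tilde{p_2}}\,\frac{\partial V}{\partial \tilde{p_2}} + \dot{s_2}\,\frac{\partial V}{\partial s_2} \right\}.
\end{align*}
Inserting the dynamics (\ref{41})-(\ref{41.5}), the right-hand side splits into a part independent of $a_1$, namely $1 + \frac{1}{2}\big(\tilde{p_2}^2 - \tilde{p_1}^2(0)\big)e^{2s_2}\,\frac{\partial V}{\partial \tilde{p_2}} + \frac{\tilde{p_2}}{2}(1-e^{2s_2})\,\frac{\partial V}{\partial s_2}$, plus the single control-dependent term $-a_1\,\frac{\partial V}{\partial \tilde{p_2}}$.

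The minimization is then immediate: since $\min_{a_1 \in [-\alpha,\alpha]}(-a_1 c) = -\alpha|c|$ for any real $c$, attained at $a_1 = \alpha\,\mathrm{sign}(c)$, the control-dependent term contributes exactly $-\alpha\big|\frac{\partial V}{\partial \tilde{p_2}}\big|$, which produces (\ref{42}). The boundary condition (\ref{v}) then requires no further work: by \autoref{3.2} a collision forces $\tilde{p_2} = +\infty$ together with $s_2 = 0$, so the remaining time to collision from $(\infty, 0)$ is zero, as already noted in \autoref{4.}. I expect the main obstacle to be rigorously justifying the dynamic programming principle and the $C^1$ Taylor expansion near the target, since the collision state $(\infty,0)$ lies at infinity and (\ref{41}) is singular there; this is precisely why the statement is hedged with the hypothesis that a time-optimal strategy exists, and why a fully rigorous treatment would ultimately require viscosity-solution techniques rather than the formal smooth computation sketched here.
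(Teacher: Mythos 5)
Your proposal is correct and follows essentially the same route as the paper: the paper simply cites the standard time-independent HJB equation from \cite{JZ} and \cite{AB} where you unfold that citation via the dynamic programming principle and a first-order Taylor expansion, and both arguments then conclude identically by computing $\min_{a_1\in[-\alpha,\alpha]}\left[-a_1\frac{\partial V}{\partial \tilde{p_2}}\right]=-\alpha\left|\frac{\partial V}{\partial \tilde{p_2}}\right|$ and reading off $V(\infty,0)=0$ from the earlier remarks. Your closing caveat about the singularity of the target state $(\infty,0)$ is well placed; it is exactly the issue the paper defers to Lemma \ref{4.1}, where the equation is solved by a monotonicity trick rather than by viscosity-solution theory.
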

\begin{proof}
Let us set cost function as $J(\tilde{p_2},s_2,a_1)=\int_{0}^{\tau} 1 \ ds$, where $\tau$ is a collision time. Since $V$ is independent of time, the Hamilton-Jacobi-Bellman (HJB) equation related to $(\ref{41})-(\ref{41.6})$ [see for instance \cite{JZ} or \cite{AB}] reads as
\begin{align*}
\min_{a_1\in[-\alpha,\alpha]}\left\{\left(\frac{\partial V}{\partial \tilde{p_2}},\frac{\partial V}{\partial s_2}\right)\cdot\left(\frac{1}{2}\left(\tilde{p_2}^2-\tilde{p_1}^2(0)\right)e^{2s_2}-a_1,\frac{\tilde{p_2}}{2}(1-e^{2s_2})\right)+1\right\}=0,
\end{align*}
or
\begin{align*}
\min_{a_1\in[-\alpha,\alpha]}\left\{\frac{\tilde{p_2}}{2}(1-e^{2s_2})\frac{\partial V}{\partial s_2}+\left(\frac{1}{2}\left(\tilde{p_2}^2-\tilde{p_1}^2(0)\right)e^{2s_2}-a_1)\right)\frac{\partial V}{\partial \tilde{p_2}}+1\right\}=0,
\end{align*}	
equivalently (assuming $\frac{\partial V}{\partial \tilde{p_2}}\neq 0$)
\begin{align*}
\frac{\tilde{p_2}}{2}(1-e^{2s_2})\frac{\partial V}{\partial s_2}+\frac{1}{2}\left(\tilde{p_2}^2-\tilde{p_1}^2(0)\right)e^{2s_2}\frac{\partial V}{\partial \tilde{p_2}}-\alpha \left|\frac{\partial V}{\partial \tilde{p_2}}\right|+1=0.
\end{align*}
By \autoref{4.}, $V(\infty,0)=0$.
\end{proof}
A next step in Bellman's approach is to solve $(\ref{42}), (\ref{v})$. Notice that $(\ref{42}), (\ref{v})$ looks quite unpleasant. Due to the nonlinearity, one might consider an application of viscosity solutions theory in that case (see \cite{AB},\cite{LC} or \cite{JZ}). But, the boundary condition (\ref{v}) is highly non-standard. Moreover, we anyway use a trick which allows us to reduce this problem to the linear first order Hamilton-Jacobi equation. A crucial idea is to consider an Ansatz that $V$, solving $(\ref{42}), (\ref{v})$, is decreasing in $\tilde{p_2}$. This way, we reduce the problem to the linear one with a friendly structure. Even the non-standard boundary condition fits to our approach. At the end of the reasoning we check that the solution of the linear problem that we obtained is indeed monotone in $\tilde{p_2}$ and so it satisfies the original nonlinear problem.
\begin{lemma}\label{4.1}
Let us consider a problem $(\ref{42})$, $(\ref{v})$. Then there exists a $\mathcal{C}^1$ solution of $(\ref{42})$ attaining the boundary value $V(\infty,0)=0$. Moreover,
\begin{align*}
V(\tilde{p_2},s_2):=\min\left\{\tau\geq 0 \ | \  \tilde{p_2}(\tau)=\infty, s_2(\tau)=0, \ \mbox{where} \ \tilde{p_2}(t),s_2(t)\ \mbox{solve}\ (\ref{a}),(\ref{b})\right.\\ \left.\mbox{and attaining initial conditions}\  \tilde{p_2}(0)=\tilde{p_2}, s_2(0)=s_2\right\} ,
\end{align*}
and
\begin{eqnarray}
\frac{d\tilde{p_2}}{d\tau}&=&\frac{1}{2}\left(\tilde{p_2}^2-\tilde{p_1}^2(0)\right)e^{2s_2}+\alpha,\label{a} \\ \frac{ds_2}{d\tau}&=&\frac{\tilde{p_2}}{2}(1-e^{2s_2})\label{b}.
\end{eqnarray}
\end{lemma}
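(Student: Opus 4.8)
The plan is to exploit the monotonicity Ansatz advertised before the lemma: assume that a value function $V$ solving $(\ref{42})$ is strictly decreasing in $\tilde{p_2}$, i.e. $\partial V/\partial\tilde{p_2}<0$, so that $\left|\partial V/\partial\tilde{p_2}\right|=-\partial V/\partial\tilde{p_2}$. Substituting this into the nonlinear equation $(\ref{42})$ collapses the troublesome modulus and turns it into the \emph{linear} first-order equation
\[
\frac{\tilde{p_2}}{2}(1-e^{2s_2})\frac{\partial V}{\partial s_2}+\left(\frac{1}{2}\left(\tilde{p_2}^2-\tilde{p_1}^2(0)\right)e^{2s_2}+\alpha\right)\frac{\partial V}{\partial \tilde{p_2}}+1=0 .
\]
The key observation is that the coefficient vector field of this linear PDE is exactly $\bigl((\ref{a}),(\ref{b})\bigr)$, that is, the dynamics $(\ref{41}),(\ref{41.5})$ driven by the constant control $a_1=-\alpha$. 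Thus solving the nonlinear HJB problem under the Ansatz is equivalent to integrating the trajectories of the system under the extremal no-switching control.

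Next I would solve the linear equation by the method of characteristics. Along a characteristic curve $\bigl(\tilde{p_2}(\tau),s_2(\tau)\bigr)$ solving $(\ref{a}),(\ref{b})$ one reads off from the linear PDE that $\frac{d}{d\tau}V\bigl(\tilde{p_2}(\tau),s_2(\tau)\bigr)=-1$. The characteristics are anchored at the collision set, where by \autoref{4.} we have $V(\infty,0)=0$. Integrating $dV/d\tau=-1$ backward from that terminal point gives precisely $V(\tilde{p_2},s_2)=\tau_{\mathrm{coll}}(\tilde{p_2},s_2)$, the elapsed time along the trajectory of $(\ref{a}),(\ref{b})$ starting at $(\tilde{p_2},s_2)$ until $s_2=0$ (equivalently $\tilde{p_2}=\infty$, by \autoref{3.2}), which is exactly the claimed representation formula. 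To guarantee that this function is finite and well-defined on all of $\mathbb{R}\times(-\infty,0]$, I would rerun the controllability argument of \autoref{23} with $a_1=-\alpha$: since $\alpha\geq\frac12\tilde{p_1}^2(0)$, the right-hand side of $(\ref{a})$ dominates pointwise that of $(\ref{16})$, so $\tilde{p_2}$ grows at least as fast as in \autoref{23} and the same integration-and-comparison estimate leading to $(\ref{18})$ forces $s_2\to0$ in finite $\tau$. Hence every characteristic reaches the collision point in finite time and the minimum in the representation is attained.

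It remains to tie up two loose ends. First, the $\mathcal{C}^1$ regularity: since the right-hand sides of $(\ref{a}),(\ref{b})$ are smooth in the interior, the flow is smooth and the characteristics neither cross nor leave gaps, so by smooth dependence on initial data together with the implicit function theorem applied to the first-hitting relation $s_2(\tau;\cdot)=0$, the time-to-collision $V$ is $\mathcal{C}^1$ on the open region. Second, I must \emph{close the Ansatz} by checking that the constructed $V$ is genuinely decreasing in $\tilde{p_2}$, so that $-\partial V/\partial\tilde{p_2}=\left|\partial V/\partial\tilde{p_2}\right|$ and $V$ therefore solves the original nonlinear equation $(\ref{42})$ and not merely its linearization; monotonicity is plausible because raising $\tilde{p_2}$ pushes the trajectory nearer to the blow-up and shortens the time-to-collision.

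The main obstacle I anticipate lies at the singular corner $(\tilde{p_2},s_2)=(\infty,0)$. The boundary datum is prescribed at a single point at infinity rather than along a curve, so the usual Cauchy theory for first-order PDEs does not apply verbatim; one must control how the characteristics emanate from this blow-up point and show both that the hitting time depends regularly on the data up to the boundary and that the verification of the monotonicity sign survives the passage $\tilde{p_2}\to\infty$. Establishing the $\mathcal{C}^1$ character and the sign of $\partial V/\partial\tilde{p_2}$ uniformly near that corner is where the real work will be.
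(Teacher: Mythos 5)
Your strategy coincides with the paper's: impose the Ansatz $\partial V/\partial\tilde{p_2}<0$ to collapse the modulus in $(\ref{42})$ into the linear equation $(\ref{45})$, observe that its characteristics are exactly the $a_1=-\alpha$ dynamics $(\ref{a})$, $(\ref{b})$, integrate $dV/d\tau=-1$ backward from the collision point to identify $V$ with the hitting time, and rerun the argument of Proposition \ref{23} to see that every characteristic reaches $(\infty,0)$ in finite time. Up to that point the proposal is correct and is the same construction the paper carries out.

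The problem is that you stop short of the decisive step: you declare the monotonicity of $V$ in $\tilde{p_2}$ merely ``plausible'' and defer it, together with the $\mathcal{C}^1$ regularity, as ``where the real work will be.'' But closing the Ansatz \emph{is} the content of the lemma: without the sign $\partial V/\partial\tilde{p_2}<0$, your construction solves only the linear surrogate $(\ref{45})$, $(\ref{V})$, not the nonlinear Hamilton--Jacobi--Bellman problem $(\ref{42})$, $(\ref{v})$, so nothing is proven. The missing device, which the paper uses and which also removes the obstacle you anticipate at the corner $(\infty,0)$, is an explicit integral representation of the hitting time along characteristics. Since $\alpha\geq\frac{1}{2}\tilde{p_1}^2(0)$ makes the right-hand side of $(\ref{a})$ positive, $\tilde{p_2}$ can be used as the parameter along each trajectory, giving
\begin{align*}
V(\tilde{p_2},s_2)=\int_{\tilde{p_2}}^{\infty}\frac{2\,d\tilde{p_2}}{\tilde{p_2}^2e^{2s_2}+\tilde{p_1}^2(0)\left(1-e^{2s_2}\right)+\left(2\alpha-\tilde{p_1}^2(0)\right)},
\end{align*}
where each of the three terms in the denominator is nonnegative on $s_2\leq 0$; the difference quotient in $\tilde{p_2}$ then yields at once that $\partial V/\partial\tilde{p_2}$ exists, is continuous, and is strictly negative --- this is $(\ref{t})$ in the paper, i.e.\ exactly the fact you left open. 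The companion representation $V(\tilde{p_2},s_2)=\int_{s_2}^{0}\frac{2\,ds_2}{\tilde{p_2}\left(1-e^{2s_2}\right)}$, obtained from $(\ref{b})$, handles $\partial V/\partial s_2$ the same way. These formulas give the $\mathcal{C}^1$ claim directly (no implicit function theorem or separate uniform analysis near the blow-up corner is required, since the improper integrals converge and the boundary value $V(\infty,0)=0$ is read off from them), so the two facts you defer are precisely what must be, and in the paper are, computed.
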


\begin{proof}
Let us first consider a linear problem
\begin{eqnarray}
\frac{\tilde{p_2}}{2}(1-e^{2s_2})\frac{\partial \tilde{V}}{\partial s_2}+\frac{1}{2}\left(\tilde{p_2}^2-\tilde{p_1}^2(0)\right)e^{2s_2}\frac{\partial \tilde{V}}{\partial \tilde{p_2}}+\alpha \frac{\partial \tilde{V}}{\partial \tilde{p_2}}+1=0,\label{45}\\ \tilde{V}(\infty,0)=0\label{V}
\end{eqnarray}
The characteristic ODEs associated with $(\ref{45})$, $(\ref{V})$ are $(\ref{a}), (\ref{b})$ and
\begin{align}
\frac{d\tilde{V}}{d\tau}&=-1 ,\ \tilde{V}(\infty,0)=0.\label{c}
\end{align}
Note that, by Proposition \ref{23}, for any $\left(\tilde{p_2}, s_2\right)\in\mathbb{R}\times(-\infty,0]$ there exists a unique regular trajectory of $(\ref{a}), (\ref{b})$ and $(\ref{c})$ that goes through $(\tilde{p_2},s_2)$ . Along the characteristics, we can solve $(\ref{45})$, $(\ref{V})$ and obtain $\tilde{V}(\tilde{p_2},s_2)=\tau$. Hence, we obtain a $\tilde{V}(\tilde{p_2},s_2)$ that solves $(\ref{45})$, $(\ref{V})$.\\
\\
Next, we show that $\tilde{V}$ is a $\mathcal{C}^1$ function. Take $h>0, \tilde{p_2}\in\mathbb{R}, s_2<0$, in view of $(\ref{b})$ we have
\begin{align}
\tilde{V}(\tilde{p_2}, s_2+h)&=\int_{s_2+h}^{0}\frac{2\ ds_2}{\tilde{p_2}\left(1-e^{2s_2}\right)},\label{d_1}\\
\tilde{V}(\tilde{p_2} , s_2)&=\int_{s_2}^{0}\frac{2\ ds_2}{\tilde{p_2}\left(1-e^{2s_2}\right)}\label{d_2}.	
\end{align}
Now using $(\ref{d_1})$ and $(\ref{d_2})$ we obtain
\begin{align*}
	\lim_{h\to 0}\frac{\tilde{V}(\tilde{p_2},s_2+h)-\tilde{V}(\tilde{p_2},s_2)}{h}&=-\lim_{h\to 0}\frac{1}{h}\int_{s_2}^{s_2+h}\frac{2\ ds_2}{\tilde{p_2}\left(1-e^{2s_2}\right)}\\&=-\frac{2}{\tilde{p_2}\left(1-e^{2s_2}\right)}.
\end{align*}
Hence, $\frac{\partial\tilde{ V}}{\partial s_2}$ exists and it is continuous.

Next, we recall that $\alpha\geq\frac{1}{2}\tilde{p_1}^2(0)$.
Again applying similar argument on $\tilde{p_2}$ and using $(\ref{a})$, we obtain
\begin{align}
	\tilde{V}(\tilde{p_2}+h,s_2)&=\int_{\tilde{p_2}+h}^{\infty}\frac{2 \ d\tilde{p_2}}{\tilde{p_2}^2e^{2s_2}+\tilde{p_1}^2(0)\left(1-e^{2s_2}\right)+\left(2\alpha-\tilde{p_1}^2(0)\right)},\label{46}\\ \tilde{V}(\tilde{p_2},s_2)&=\int_{\tilde{p_2}}^{\infty}\frac{2 \ d\tilde{p_2}}{\tilde{p_2}^2e^{2s_2}+\tilde{p_1}^2(0)\left(1-e^{2s_2}\right)+\left(2\alpha-\tilde{p_1}^2(0)\right)}.\label{47}
\end{align}
In particular, $(\ref{46})$ and $(\ref{47})$ yield
\begin{align}\label{V.}
	\lim_{h\to 0}\frac{\tilde{V}(\tilde{p_2}+h,s_2)-\tilde{V}(\tilde{p_2},s_2)}{h}&=-\lim_{h\to 0}\frac{1}{h}\int_{\tilde{p_2}}^{\tilde{p_2}+h}\frac{2\ d\tilde{p_2}}{\tilde{p_2}^2e^{2s_2}+\tilde{p_1}^2(0)\left(1-e^{2s_2}\right)+\left(2\alpha-\tilde{p_1}^2(0)\right)}<0.
\end{align}
Hence $\frac{\partial \tilde{V}}{\partial \tilde{p_2}}$ exists and it is continuous. So, we conclude that $V(\tilde{p_2},s_2)$ is a $\mathcal{C}^1$ function. Moreover, in view of $(\ref{V.})$, we also have
\begin{align}\label{t}
	\frac{\partial \tilde{V}}{\partial \tilde{p_2}}<0.
\end{align}
Hence, in view of $(\ref{t})$, we notice that $\tilde{V}$ solves not only $(\ref{45}),(\ref{V})$ but also $(\ref{42}), (\ref{v})$. The latter is the Hamilton-Jacobi-Bellman equation related to $(\ref{41})-(\ref{41.6})$.
\end{proof}
In \autoref{4.1} we have shown that $V$ is a solution of the Hamilton-Jacobi-Bellman equation $(\ref{42}), (\ref{v})$, related to the time-optimal control problem $(\ref{41})-(\ref{41.6})$. Hence, a standard dynamic programming argument would give us an optimal control as an argmin of the Hamiltonian. But, our problem $(\ref{41})-(\ref{41.6})$ is not a regular one, as $\tilde{p_2}$ blows up in finite time, the assumptions under which one usually formulates the Bellman principle are not met. However, $V$ is a $\mathcal{C}^1$ function, we can still use the classical method, we provide the proof for reader's convenience.
\begin{lemma}\label{4.3}
Let us consider time-optimal problem $(\ref{41})-(\ref{41.6})$. A constant strategy $a_1=-\alpha$ is optimal.
\end{lemma}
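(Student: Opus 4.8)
The plan is to run the standard verification step of dynamic programming: having produced in \autoref{4.1} a $\mathcal{C}^1$ value function $V$ that solves the Hamilton-Jacobi-Bellman equation $(\ref{42})$ with the strict sign property $\frac{\partial V}{\partial \tilde{p_2}}<0$ from $(\ref{t})$, I would show that the minimizing control in the HJB Hamiltonian is $a_1=-\alpha$ pointwise, and then confirm that this constant strategy actually realizes the value $V$ along its own trajectory. First I would return to the unminimized form of the equation, namely
\begin{align*}
\min_{a_1\in[-\alpha,\alpha]}\left\{\frac{\tilde{p_2}}{2}(1-e^{2s_2})\frac{\partial V}{\partial s_2}+\left(\frac{1}{2}\left(\tilde{p_2}^2-\tilde{p_1}^2(0)\right)e^{2s_2}-a_1\right)\frac{\partial V}{\partial \tilde{p_2}}+1\right\}=0,
\end{align*}
and observe that the only $a_1$-dependent term is $-a_1\frac{\partial V}{\partial \tilde{p_2}}$. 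Since $\frac{\partial V}{\partial \tilde{p_2}}<0$ everywhere by $(\ref{t})$, the map $a_1\mapsto -a_1\frac{\partial V}{\partial \tilde{p_2}}$ is strictly increasing, so its minimum over $[-\alpha,\alpha]$ is attained uniquely at $a_1=-\alpha$. This identifies the HJB argmin as the constant $-\alpha$.

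Next I would carry out the converse half, which is what makes $-\alpha$ genuinely optimal rather than merely the pointwise minimizer. Plug $a_1\equiv-\alpha$ into the controlled dynamics $(\ref{41})$–$(\ref{41.5})$; the resulting closed-loop system is exactly the characteristic system $(\ref{a})$, $(\ref{b})$ of \autoref{4.1}. Along that trajectory, differentiating $V$ and using $(\ref{42})$ together with $a_1=-\alpha$ gives $\frac{d}{dt}V(\tilde{p_2}(t),s_2(t))=-1$, so $V$ decreases at unit rate and the collision (where $V=0$, by \autoref{4.}) is reached in time exactly $V(\tilde{p_2}^*,s_2^*)$. For any other admissible control, the HJB inequality $\frac{\tilde{p_2}}{2}(1-e^{2s_2})\frac{\partial V}{\partial s_2}+\left(\frac{1}{2}(\tilde{p_2}^2-\tilde{p_1}^2(0))e^{2s_2}-a_1\right)\frac{\partial V}{\partial \tilde{p_2}}+1\ge 0$ forces $\frac{d}{dt}V\ge -1$, so $V$ can decrease no faster than unit rate and the collision cannot be reached earlier. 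Comparing the two shows $a_1=-\alpha$ achieves the minimum time.

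The one delicate point, and the step I expect to be the main obstacle, is that the trajectory terminates at the singular configuration $(\tilde{p_2},s_2)=(+\infty,0)$ where $\tilde{p_2}$ blows up in finite time, so the usual Bellman verification theorem (which assumes a bounded terminal manifold and globally Lipschitz dynamics) does not apply off the shelf. I would handle this by performing the comparison argument on a truncated interval up to the time $\tau_\varepsilon$ at which $s_2=-\varepsilon$ (or $\tilde{p_2}=1/\varepsilon$), where $V$ is genuinely $\mathcal{C}^1$ and the flow is regular by \autoref{23}, integrating $\frac{d}{dt}V$ there, and then letting $\varepsilon\to 0$ using \autoref{4.} that $V\to 0$ at the collision and the finiteness of the collision time established in \autoref{23} and \autoref{3.2}. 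The integrability of the defining integrals $(\ref{d_2})$ and $(\ref{47})$ near the singular endpoint is precisely what guarantees that no value is lost in this limit, so the blow-up of $\tilde{p_2}$ is harmless for the optimality conclusion.
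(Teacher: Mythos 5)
Your proposal is correct and follows essentially the same route as the paper: identify the argmin in the HJB equation as $a_1=-\alpha$ via the sign property $(\ref{t})$, then verify optimality by integrating $\frac{d}{dt}V=-1$ along the closed-loop trajectory, which is exactly the characteristic system $(\ref{a})$, $(\ref{b})$. In fact you are somewhat more careful than the paper's own proof, which asserts the final equality with $\min_{a_1}J$ without spelling out the comparison inequality $\frac{d}{dt}V\geq-1$ for arbitrary admissible controls, and which acknowledges but does not explicitly carry out the truncation argument near the singular terminal point $(\tilde{p_2},s_2)=(+\infty,0)$; both of these details in your write-up are welcome additions rather than deviations.
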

\begin{proof}
Hamilton-Jacobi-Bellman equation for control problem $(\ref{41})-(\ref{41.6})$ reads
\begin{align*}
\min_{a_1\in[-\alpha,\alpha]}\left\{\frac{\tilde{p_2}}{2}(1-e^{2s_2})\frac{\partial V}{\partial s_2}+\left(\frac{1}{2}\left(\tilde{p_2}^2-\tilde{p_1}^2(0)\right)e^{2s_2}-a_1)\right)\frac{\partial V}{\partial \tilde{p_2}}+1\right\}=0,
\end{align*}
which is equivalent to
\begin{align}\label{e}
\frac{\tilde{p_2}}{2}(1-e^{2s_2})\frac{\partial V}{\partial
	 s_2}+\frac{1}{2}(\tilde{p_2}^2-\tilde{p_1}^2(0))e^{2s_2}\frac{\partial V}{\partial \tilde{p_2}}+\min_{a_1\in[-\alpha,\alpha]}\left[-a_1\frac{\partial V}{\partial \tilde{p_2}}\right]+1=0.
\end{align}
By $(\ref{t})$ we have $\frac{\partial V}{\partial \tilde{p_2}}<0$. Hence argmin of the left-hand side of $(\ref{e})$ is attained for $a_1=-\alpha$.\\
\\
Let us denote by $\alpha^*=-\alpha$. Now, we need to show that $\alpha^*$ is an optimal control. Hamilton-Jacobi-Bellman equation related to $(\ref{41})-(\ref{41.6})$ reads
\begin{align*}
	\nabla_{(\tilde{p_2},s_2)}V(\tilde{p_2},s_2)\cdot f(\tilde{p_2},s_2,\alpha^*)+1=0.
\end{align*}
Solution $V$ is regular, thus we have
\begin{align*}
	J(\tilde{p_2},s_2,\alpha^*)&=\int_{0}^{\tau} 1\ ds=-\int_{0}^{\tau} \nabla_{(\tilde{p_2},s_2)}V(\tilde{p_2},s_2)\cdot f(\tilde{p_2},s_2,\alpha^*)\ ds\\&=-\int_{0}^{\tau} \nabla_{(\tilde{p_2},s_2)}V(\tilde{p_2},s_2)\cdot \left(\dot{\tilde{p_1}}, \dot{s_2}\right) \ ds\\&=-\int_{0}^{\tau}\frac{d}{ds}\ V(\tilde{p_2},s_2)\ ds\\&=V(\tilde{p_2}(0),s_2(0))-V(\tilde{p_2}(\tau),s_2(\tau))=\min_{a_1}J(\tilde{p_2},s_2,a_1).
\end{align*}
This concludes that $a_1=-\alpha$ is indeed an optimal control.
\end{proof}
Next, we conlude that our main Theorem \ref{2.} is proven. Proposition \ref{23} gives us controllability. Lemma \ref{4.3}, due to the equivalence of (\ref{11})-(\ref{14}) and (\ref{41})-(\ref{41.6}), finishes the proof of Theorem \ref{2.}. 

\section{Special version of a maximum principle for more general systems}

Our proof of the time-optimality of Theorem \ref{2.} can be extended to more general problems. In this section we state such extensions and sketch their proofs. 

One may notice that Hamiltonian related to $(\ref{41})-(\ref{41.6})$ is 
\[
H(\tilde{p_2},s_2,a_1)=\frac{1}{4}\left(\tilde{p_2}^2-\tilde{p_1}^2(0)\right)\left(1-e^{2s_2}\right)+a_1s_2
\]
and $H$ also attains maximum at $a_1=-\alpha$. Moreover, $a_1=-\alpha$ is an optimal control for $(\ref{41})-(\ref{41.6})$. This suggests a special version of maximum principle for a set of time-optimal control problems with a Hamiltonian structure on a plane. Indeed, the following proposition holds true.
\begin{proposition}\label{4.5}
Let us consider the following $2D$ time-optimal control problem 
\begin{eqnarray}
\dot{p}&=&-\frac{\partial F(p,q)}{\partial q}-a_1,\label{p}\\
\dot{q}&=& \frac{\partial F(p,q)}{\partial p}\label{s},
\end{eqnarray}
where $p:[0,T]\to\mathbb{R}$, $q:[0,T]\to(-\infty,0]$, $a_1:[0,T]\to[-\alpha, \alpha]$ for $\alpha>0$ and $F(p,q)$ is a $\mathcal{C}^2$ function. Further, assume that $\tilde{p}, \tilde{q}$ solve the following equations
\begin{eqnarray}
\dot{\tilde{p}}&=&-\frac{\partial F(\tilde{p},\tilde{q})}{\partial \tilde{q}}+\alpha,\label{p.}\\
\dot{\tilde{q}}&=&\frac{\partial F(\tilde{p},\tilde{q})}{\partial \tilde{p}}\label{s.},\\\tilde{p}(0)&=&p(0), \ \tilde{q}(0)=q(0),\nonumber
\end{eqnarray}
and satisfy the following conditions
\begin{itemize}
\item[i)] for any $(\tilde{p}, \tilde{q})\in \mathbb{R}\times(-\infty,0]$, there exists a unique trajectory of (\ref{p.}) and (\ref{s.}) that goes through $(\tilde{p},\tilde{q})$ and satisfies $\left(\tilde{p}(T), \tilde{q}(T)\right)=(+\infty, 0)$ for $T<\infty$,
\item[ii)] If $(\tilde{p}',\tilde{q})$ and $(\tilde{p}'',\tilde{q})$ are two pairs of initial conditions of (\ref{p.}) and (\ref{s.}), satisfying $\tilde{p}'>\tilde{p}''$, then time needed to reach $(+\infty,0)$ from the first pair of initial conditions is smaller than from the latter ones.
\end{itemize}
Then a constant strategy $a_1=-\alpha$ is optimal for $(\ref{p})-(\ref{s})$.
\end{proposition}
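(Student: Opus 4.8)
The plan is to adapt the dynamic programming argument of Lemma \ref{4.1} and Lemma \ref{4.3} to the present abstract setting. First, I would introduce the value function
\begin{align*}
V(p,q):=\min\left\{\tau\geq 0 \ | \ \left(\tilde p(\tau),\tilde q(\tau)\right)=(+\infty,0), \ \mbox{where} \ \left(\tilde p,\tilde q\right) \ \mbox{solve} \ (\ref{p.}),(\ref{s.}), \ \tilde p(0)=p, \ \tilde q(0)=q\right\},
\end{align*}
which is well defined thanks to hypothesis (i): through every $(p,q)\in\mathbb{R}\times(-\infty,0]$ there passes exactly one trajectory of the auxiliary flow $(\ref{p.})$, $(\ref{s.})$, and it reaches $(+\infty,0)$ at the finite time $\tau=V(p,q)$. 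Adjoining the characteristic equation $\frac{dV}{d\tau}=-1$ with $V(+\infty,0)=0$ and arguing along characteristics exactly as in Lemma \ref{4.1}, one obtains that $V$ solves the linear first-order equation
\begin{align}\label{genHJB}
\left(-\frac{\partial F}{\partial q}+\alpha\right)\frac{\partial V}{\partial p}+\frac{\partial F}{\partial p}\frac{\partial V}{\partial q}+1=0,
\end{align}
which is precisely the Hamilton-Jacobi-Bellman equation of $(\ref{p})$, $(\ref{s})$ with the control frozen at $a_1=-\alpha$.

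Second, I would show that $V$ is $\mathcal{C}^1$ and that $\frac{\partial V}{\partial p}<0$. Since $F\in\mathcal{C}^2$, the vector field of $(\ref{p.})$, $(\ref{s.})$ is $\mathcal{C}^1$, so by smooth dependence of solutions on initial data the flow map is $\mathcal{C}^1$; combined with hypothesis (i), which furnishes a unique regular trajectory hitting the target in finite time, this yields that $V$, read off as the travel time along the flow, is $\mathcal{C}^1$ on $\mathbb{R}\times(-\infty,0)$. This is the abstract counterpart of the explicit computation $(\ref{d_1})$--$(\ref{47})$. The sign of $\frac{\partial V}{\partial p}$ is supplied by hypothesis (ii): a larger initial $p$ reaches $(+\infty,0)$ strictly faster, so $V(\cdot,q)$ is strictly decreasing and hence $\frac{\partial V}{\partial p}<0$, exactly as in $(\ref{t})$.

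Third, I would carry out the verification step as in Lemma \ref{4.3}. Writing the full Hamilton-Jacobi-Bellman equation of $(\ref{p})$, $(\ref{s})$,
\begin{align*}
\min_{a_1\in[-\alpha,\alpha]}\left\{-\frac{\partial F}{\partial q}\frac{\partial V}{\partial p}+\frac{\partial F}{\partial p}\frac{\partial V}{\partial q}-a_1\frac{\partial V}{\partial p}+1\right\}=0,
\end{align*}
and using $\frac{\partial V}{\partial p}<0$, the term $-a_1\frac{\partial V}{\partial p}$ is minimized over $a_1\in[-\alpha,\alpha]$ exactly at $a_1=-\alpha$, and the equation then reduces to $(\ref{genHJB})$. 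Thus the $\mathcal{C}^1$ function $V$ solves the full HJB equation with pointwise minimizer $a_1=-\alpha$. Finally, since $V$ is regular, integrating $\frac{d}{ds}V=\nabla V\cdot(\dot p,\dot q)=-1$ along the trajectory produced by $a_1=-\alpha$ gives
\begin{align*}
J(p,q,-\alpha)=V(p,q)-V(+\infty,0)=V(p,q)=\min_{a_1}J(p,q,a_1),
\end{align*}
which proves that the constant strategy $a_1=-\alpha$ is optimal.

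The step I expect to be the main obstacle is the regularity of $V$ near the singular target $(+\infty,0)$. Unlike the concrete problem of Lemma \ref{4.1}, here no explicit quadratures are available, so both the $\mathcal{C}^1$ property and the strict sign $\frac{\partial V}{\partial p}<0$ must be extracted abstractly from (i) and (ii). Hypothesis (i) has to be used to prevent a loss of regularity as $\tilde p\to+\infty$, presumably by reparametrizing or compactifying the $\tilde p$-axis so that the target becomes an ordinary point of a rescaled flow; and since hypothesis (ii) only gives monotonicity of $V$ in $p$, upgrading this to the strict derivative inequality $\frac{\partial V}{\partial p}<0$ --- which is what makes $a_1=-\alpha$ the genuine argmin rather than merely one admissible minimizer --- will require combining (ii) with the $\mathcal{C}^1$ structure of the flow.
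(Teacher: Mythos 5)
Your proposal is correct and takes essentially the same route as the paper's own proof: define the time-to-target function along the frozen-control flow $(\ref{p.})$--$(\ref{s.})$, show via characteristics (as in Lemma \ref{4.1}) that it solves the linear transport equation, extract $\frac{\partial V}{\partial p}<0$ from hypothesis (ii), and conclude by the verification argument of Lemma \ref{4.3} that $a_1=-\alpha$ realizes the argmin in the HJB equation. The only cosmetic difference is that you work with a single value function where the paper introduces both $W$ (for the controlled problem) and $\tilde{W}$ (for the frozen flow) before identifying them; your explicit flagging of the regularity issue near the singular target $(+\infty,0)$ is a point the paper's sketch glosses over as well.
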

\begin{proof}
The proof follows the lines of the proof of Lemma \ref{4.1} and Lemma \ref{4.3}, so we only sketch it. 
First, we define time-minimum function
\begin{align*}
	W(p,q)=\left\{ T>0 \ | \  \ p(t),q(t) \ \mbox{solve}\ (\ref{p}),(\ref{s}) \ \mbox{with} \ p(T)=\infty, q(T)=0 \ \mbox{and}\ p(0)=p, q(0)=q\ \right\}.
\end{align*}
Observe that $W(p(T),q(T))=W(\infty,0)=0$. Dynamic programming principle shows that provided optimal strategy exists, $W$ satisfies the Hamilton-Jacobi-Bellman problem 
\begin{eqnarray}
	\frac{\partial F}{\partial p}\ \frac{\partial W}{\partial q}-\frac{\partial F}{\partial q}\ \frac{\partial W}{\partial p}-\alpha\left|\frac{\partial W}{\partial p}\right|+1=0,\label{p_1}\\
	W(\infty,0)=0\label{s_1}.
\end{eqnarray}
Next, we define $\tilde{W}$ with the use of $\tilde{p}, \tilde{q}$, solutions to (\ref{p.})-(\ref{s.}), in an analogous way as $\tilde{V}$ in Lemma \ref{4.1}.
 
Next, owing to $i)$, again following the lines in Lemma \ref{4.1},  we show that $\tilde{W}$ solves a linear PDE
\begin{eqnarray}
\frac{\partial F}{\partial p}\ \frac{\partial \tilde{W}}{\partial q}-\frac{\partial F}{\partial q}\ \frac{\partial \tilde{W}}{\partial p}+\alpha\frac{\partial \tilde{W}}{\partial p}+1=0,\label{p_i}\\
\tilde{W}(\infty,0)=0\label{s_ii}.
\end{eqnarray}
Condition $ii)$, translated in terms of $\tilde{W}$, gives
\begin{align}\label{w}
	\frac{\partial \tilde{W}}{\partial p}<0.
\end{align}
The latter allows us to say that $\tilde{W}$ also solves $(\ref{p_1}),(\ref{s_1})$.

Since $W$ is a solution of the Hamilton-Jacobi-Bellman equation of $(\ref{p_1}),(\ref{s_1})$, related to the time-optimal control problem $(\ref{p}),(\ref{s})$, we are in a position to point a time-optimal control. Hamilton-Jacobi-Bellman equation related to $(\ref{p}),(\ref{s})$ reads as
\begin{align*}
	\frac{\partial F}{\partial p}\ \frac{\partial W}{\partial q}-\frac{\partial F}{\partial q}\ \frac{\partial W}{\partial p}+\min_{a_1\in[-\alpha,\alpha]}\left[-a_1\frac{\partial W}{\partial p}\right]+1=0.
\end{align*}
Hence, it follows that $a_1=-\alpha$ is an optimal control using similar exposition as in Lemma \ref{4.3}.
\end{proof}
We state below the fact that, according to Proposition \ref{23} and (\ref{t}), conditions (i) and (ii) of Proposition \ref{4.5} are satisfied by our original problem.  
\begin{rem}
	Our original time-optimal control problem $(\ref{41})-(\ref{41.6})$ satisfies conditions $i), ii)$ of Proposition \ref{4.5}. 
\end{rem}

Next, we state a multidimensional, even more general, version of time-optimal maximum principle which 
states the no-switching bang-bang controls as time-optimal ones in some cases, hopefully, of interest.  
\begin{proposition}
Let $a_i:[0,T]\to[-\alpha_i,\alpha_i]$ for $i=1,\ldots,n$, $\alpha_i>0$. Let us consider a time-optimal control problem as follows
\begin{eqnarray}
	\dot{p_i}&=&-\frac{\partial G(p,q)}{\partial q_i}-a_i,\label{p_2}\\
	\dot{q_i}&=&\frac{\partial G(p,q)}{\partial p_i}\label{s_2},
\end{eqnarray}
where $p_i:[0,T]\to\mathbb{R}$, $q_i:[0,T]\to(-\infty,0]$, $p=\left(p_1,\ldots,p_n\right), q=\left(q_1,\ldots,q_n\right)$ and $G(p,q)$ is a $\mathcal{C}^2$ function. Further, assume that $\tilde{p_i},\tilde{q_i}$ solve the following equations
\begin{eqnarray}
\dot{\tilde{p_i}}&=&-\frac{\partial G(\tilde{p},\tilde{q})}{\partial {q_i}}+\alpha_i,\label{p_3}\\
\dot{\tilde{q_i}}&=&\frac{\partial G(\tilde{p},\tilde{q})}{\partial \tilde{p_i}}\label{s_3},\\\tilde{p_i}(0)&=&p_i(0), \ \tilde{q_i}(0)=q_i(0),\nonumber
\end{eqnarray}
and moreover $\tilde{p}=\left(\tilde{p_1},\ldots,\tilde{p_n}\right),\tilde{q}=\left(\tilde{q_1},\ldots,\tilde{q_n}\right)$ satisfy the following conditions
\begin{itemize}
	\item[i)] for any $(\tilde{p}, \tilde{q})\in \mathbb{R}^n\times(-\infty,0]^n$, there exists a unique trajectory of $(\ref{p_3})$ and $(\ref{s_3})$ that goes through $(\tilde{p},\tilde{q})$ and hits the the terminal point $\left(\tilde{p}(T), \tilde{q}(T)\right)=(+\infty,...,+\infty,0,...,0)$,
	\item[ii)] Let $(p',q)$ and $(p'',q)$ be two distinct initial pairs such that for any $i\in \{1,...,n\}$, $p'_j=p''_j$, $j\neq i$, $\tilde{p}'_i>\tilde{p}''_i$. Then the time needed to reach $(+\infty,...,0,...,0))$ from $(p',q)$ is smaller than the time needed when starting from $(p'',q)$.
\end{itemize}
Then a constant strategy $a_i=-\alpha_i$ is a time-optimal one for $(\ref{p_2})-(\ref{s_2})$.
\end{proposition}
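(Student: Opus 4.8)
The plan is to run, in $n$ dimensions, the same three-move argument that proved Proposition \ref{4.5}, which itself reproduces Lemma \ref{4.1} and Lemma \ref{4.3}. First I would introduce the time-minimum function
\[
W(p,q)=\min\{\,T>0 \ : \ p(t),q(t)\ \text{solve}\ (\ref{p_2})-(\ref{s_2}),\ (p(T),q(T))=(+\infty,\dots,+\infty,0,\dots,0),\ (p(0),q(0))=(p,q)\,\},
\]
so that $W$ vanishes at the terminal point $(+\infty,\dots,+\infty,0,\dots,0)$. The dynamic programming principle then shows that, whenever an optimal strategy exists, $W$ solves the Hamilton--Jacobi--Bellman problem
\[
\sum_{i=1}^n\frac{\partial G}{\partial p_i}\frac{\partial W}{\partial q_i}-\sum_{i=1}^n\frac{\partial G}{\partial q_i}\frac{\partial W}{\partial p_i}-\sum_{i=1}^n\alpha_i\left|\frac{\partial W}{\partial p_i}\right|+1=0,\qquad W(+\infty,\dots,+\infty,0,\dots,0)=0,
\]
where the sum of absolute values arises from minimising $-\sum_i a_i\,\partial_{p_i}W$ over the box $\prod_i[-\alpha_i,\alpha_i]$.

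Next I would build $\tilde W$ from the modified flow $(\ref{p_3})-(\ref{s_3})$ precisely as $\tilde V$ was constructed in Lemma \ref{4.1}. By condition $i)$ the trajectories of $(\ref{p_3})-(\ref{s_3})$ foliate $\mathbb{R}^n\times(-\infty,0]^n$ and every one of them reaches the terminal point in finite time, so defining $\tilde W(p,q)$ to be that arrival time is unambiguous. Along each characteristic one has $d\tilde W/d\tau=-1$, whence $\tilde W$ solves the \emph{linear} first-order equation
\[
\sum_{i=1}^n\frac{\partial G}{\partial p_i}\frac{\partial \tilde W}{\partial q_i}-\sum_{i=1}^n\frac{\partial G}{\partial q_i}\frac{\partial \tilde W}{\partial p_i}+\sum_{i=1}^n\alpha_i\frac{\partial \tilde W}{\partial p_i}+1=0,\qquad \tilde W(+\infty,\dots,+\infty,0,\dots,0)=0,
\]
after which I would verify that $\tilde W$ is $\mathcal{C}^1$, reproducing coordinate by coordinate the integral-quotient differentiation carried out in Lemma \ref{4.1}.

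The decisive step is to convert condition $ii)$ into the sign information $\partial \tilde W/\partial p_i<0$ for every $i$: raising a single momentum coordinate while freezing the remaining ones strictly shortens the arrival time, which is exactly the $n$-dimensional analogue of $(\ref{t})$ and $(\ref{w})$. Once all $\partial_{p_i}\tilde W<0$, each term $\alpha_i\,\partial_{p_i}\tilde W$ equals $-\alpha_i|\partial_{p_i}\tilde W|$, so the linear equation satisfied by $\tilde W$ coincides with the nonlinear HJB problem; hence $\tilde W$ is a $\mathcal{C}^1$ solution of the latter. Finally, as in Lemma \ref{4.3}, the negativity of $\partial_{p_i}W$ forces the argmin in $\min_{a_i}[-a_i\,\partial_{p_i}W]$ to be attained at $a_i=-\alpha_i$, and a verification computation — integrating $\tfrac{d}{ds}W$ along the trajectory generated by $a_i\equiv-\alpha_i$ — yields $J=W(p(0),q(0))-W(+\infty,\dots,+\infty,0,\dots,0)=\min_{a}J$, establishing optimality of the constant strategy.

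The main obstacle I anticipate is the $\mathcal{C}^1$-regularity of $\tilde W$ together with the behaviour at the singular terminal manifold, where several coordinates blow up simultaneously. In the planar case of Lemma \ref{4.1} this was settled by explicit one-dimensional integral representations of $\tilde V$ in each variable; in $n$ dimensions no such closed forms are available, so differentiability must instead be extracted from smooth dependence of the characteristic flow on its initial data, uniformly up to the blow-up time granted by condition $i)$. Securing this uniform smoothness near $(+\infty,\dots,+\infty,0,\dots,0)$, and making rigorous the claim that condition $ii)$ delivers strict monotonicity of $\tilde W$ in each $p_i$ \emph{separately}, is where the real difficulty lies; the concluding dynamic-programming verification is then routine.
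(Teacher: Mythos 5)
Your proposal is correct and follows essentially the same route as the paper: a time-minimum function, the nonlinear HJB equation via dynamic programming, a linear auxiliary equation solved along the characteristics of the $+\alpha_i$ flow using condition i), the sign condition $\partial_{p_i}\tilde W<0$ extracted from condition ii) to reconcile the linear and nonlinear problems, and the final verification integral as in Lemma \ref{4.3} (the paper merely abbreviates all this with Poisson-bracket notation, writing $\{G,Y\}$ for your sums). The regularity issue near the blow-up manifold that you flag as the real difficulty is likewise left unaddressed in the paper's own sketch, so your attempt is, if anything, more candid about the gap both arguments share.
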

\begin{proof}
The proof is analogous to the proof of Proposition \ref{4.5}. Hence, we shall not discuss it in full details.
To shorten the notation let us recall the definition of the Poisson bracket of two regular functions $F(p,q),G(p,q)$, 
\[
\{F,G\}:=\sum_{i=1}^n\frac{\partial F}{\partial p_i}\frac{\partial G}{\partial q_i}-\sum_{i=1}^n\frac{\partial F}{\partial q_i}\frac{\partial G}{\partial p_i}.
\]
Next, following the previous strategy, we define a time-minimum function
\begin{align*}
Y(p,q)=\left\{ T>0 \ | \  \ p,q \ \mbox{solve}\ (\ref{p_2}),(\ref{s_2}) \ \mbox{with} \ p_i(T)=\infty, q_i(T)=0, i=1,..,n \ \mbox{and}\ p(0)=p,q(0)=q\ \right\}.
\end{align*}
Observe that $Y(p(T),q(T))=0$. Now, Bellman's principle allows to say that, if the time-optimal strategy exists, then the following Hamilton-Jacobi-Bellman equation is satisfied
\begin{eqnarray}
\{G,Y\}-\sum_{i=1}^n\alpha_i\left|\frac{\partial Y}{\partial p_i}\right|+1=0,\label{p_y}\\
Y(\infty,...,0,...,0)=0\label{s_y}.
\end{eqnarray}
Again, exactly the same way as in the proof of Proposition \ref{4.5}, using $i)$ and method of characteristics, we show that $\tilde{Y}$, defined analogously as $\tilde{W}$ in the proof of Proposition \ref{4.5}, solves the corresponding linear PDE
\begin{eqnarray}
\{G,\tilde{Y}\}+\sum_{i=1}^n\alpha_i\frac{\partial \tilde{Y}}{\partial p_i}+1=0,\label{p_yi}\\
\tilde{Y}(\infty,...,0,...,0)=0\label{s_yii}.
\end{eqnarray}
Next, condition $ii)$ translated in terms of $\tilde{Y}$, tells us that solution to $(\ref{p_2})-(\ref{s_2})$ satisfies
\begin{align}\label{y}
\frac{\partial \tilde{Y}}{\partial p_i}<0.
\end{align}
Hence in view of $(\ref{y})$, we notice that $\tilde{Y}$ also solves $(\ref{p_y}),(\ref{s_y})$.\\
\\
Since $Y$ is a solution of the Hamilton-Jacobi-Bellman equation $(\ref{p_y}),(\ref{s_y})$, related to the time-optimal control problem $(\ref{p_2}),(\ref{s_2})$, we are in a position to find a non-switching optimal control. 
Indeed, it follows that $a_i=-\alpha_i, i=1,...,n$, is a time-optimal control similarly as in the proof of Proposition \ref{4.5}.
\end{proof}

\hspace{0.7cm}
\textbf{Acknowledgement.} T.C.  and B.D. were both partially supported by the National Science Centre grant SONATA BIS 7 number UMO-2017/26/E/ST1/00989.
\newpage

\end{titlepage}
\end{document}